\DeclareMathAlphabet{\mathpzc}{OT1}{pzc}{m}{it}
\author[M. Logares]{Marina Logares}
\address{Instituto de Ciencias Matem\'aticas (CSIC-UAM-UC3M-UCM),
C/ Nicolas Cabrera 15, 28049 Madrid, Spain}
\email{marina.logares@icmat.es}
\author[V. Mu\~{n}oz]{Vicente Mu\~{n}oz}
\address{Facultad de Matem\'aticas, Universidad Complutense de Madrid,
Plaza Ciencias 3, 28040 Madrid, Spain}
\address{Instituto de Ciencias Matem\'aticas (CSIC-UAM-UC3M-UCM),
C/ Nicolas Cabrera 15, 28049 Madrid, Spain}
\email{vicente.munoz@mat.ucm.es}
\title[Hodge polynomials of $\SL(2,\CC)$-character varieties]{Hodge 
polynomials of the $\SL(2,\CC)$-character variety of an elliptic curve with two marked points}
\thanks{This work has been supported by MINECO: ICMAT Severo Ochoa project SEV-2011-0087. The author was partially supported by FCT(Portugal) through Project PTDC/MAT/099275/2008.  Partially supported by FCT  with European Regional Development Fund (COMPETE) and national funds through the project PTDC/MAT/098770/2008. Second author partically supported by MICINN Project MTM2010-17389.}
\subjclass[2000]{Primary: 14D20. Secondary: 14C30, 14L24}
\keywords{Character variety, elliptic curve, Hodge-Deligne polynomial}
\DeclareMathOperator{\Id}{Id}             
\DeclareMathOperator{\tr}{Tr\,}             
\DeclareMathOperator{\SL}{SL}
\DeclareMathOperator{\PGL}{PGL}
\DeclareMathOperator{\Gr}{Gr}
\DeclareMathOperator{\Tr}{Tr\,}       
\begin{document}

\newtheorem{thm}{Theorem}[section]
\newtheorem{prop}[thm]{Proposition}
\newtheorem{lem}[thm]{Lemma}
\newtheorem{cor}[thm]{Corollary}
\newtheorem{conjecture}{Conjecture}

\theoremstyle{definition}
\newtheorem{defn}[thm]{Definition}
\newtheorem{ex}[thm]{Example}
\newtheorem{as}{Assumption}

\theoremstyle{remark}
\newtheorem{rmk}[thm]{Remark}

\theoremstyle{remark}
\newtheorem*{prf}{Proof}

\newcommand{\iacute}{\'{\i}} 
\newcommand{\norm}[1]{\lVert#1\rVert} 

\newcommand{\lto}{\longrightarrow}
\newcommand{\hra}{\hookrightarrow}

\newcommand{\suchthat}{\;\;|\;\;}
\newcommand{\dbar}{\overline{\partial}}

\newcommand{\cC}{\mathcal{C}}
\newcommand{\cD}{\mathcal{D}}
\newcommand{\cG}{\mathcal{G}} 
\newcommand{\cF}{\mathcal{F}}
\newcommand{\cO}{\mathcal{O}} 
\newcommand{\cM}{\mathcal{M}} 
\newcommand{\cN}{\mathcal{N}} 
\newcommand{\cP}{\mathcal{P}} 
\newcommand{\cS}{\mathcal{S}} 
\newcommand{\cU}{\mathcal{U}} 
\newcommand{\cX}{\mathcal{X}}
\newcommand{\cT}{\mathcal{T}}
\newcommand{\cV}{\mathcal{V}}
\newcommand{\cB}{\mathcal{B}}
\newcommand{\cR}{\mathcal{R}}
\newcommand{\cH}{\mathcal{H}}

\newcommand{\ext}{\mathrm{ext}} 
\newcommand{\x}{\times}

\newcommand{\mM}{\mathscr{M}} 

\newcommand{\CC}{\mathbb{C}} 
\newcommand{\QQ}{\mathbb{Q}} 
\newcommand{\PP}{\mathbb{P}} 
\newcommand{\HH}{\mathbb{H}} 
\newcommand{\RR}{\mathbb{R}} 
\newcommand{\ZZ}{\mathbb{Z}} 

\renewcommand{\lg}{\mathfrak{g}} 
\newcommand{\lh}{\mathfrak{h}} 
\newcommand{\lu}{\mathfrak{u}} 
\newcommand{\la}{\mathfrak{a}} 
\newcommand{\lb}{\mathfrak{b}} 
\newcommand{\lm}{\mathfrak{m}} 
\newcommand{\lgl}{\mathfrak{gl}} 
\newcommand{\too}{\longrightarrow}
\newcommand{\imat}{\sqrt{-1}} 

\hyphenation{mul-ti-pli-ci-ty}

\hyphenation{mo-du-li}

\begin{abstract}
We compute the Hodge polynomials for the moduli space of representations of an elliptic curve with two marked points into $\SL(2,\CC)$. 
When we fix the conjugacy classes of the representations around the marked points to be diagonal and of modulus one, the character variety is diffeomorphic 
to the moduli space of strongly parabolic Higgs bundles, whose Betti numbers are known. In that case we can recover some of the Hodge 
numbers of the character variety. We extend this result to the moduli space of doubly periodic instantons.
\end{abstract}

\maketitle

\section{Introduction}

Let $X$ be a projective algebraic curve of genus $g\geq 1$ and
$x_1,\ldots, x_s\in X$ a collection of marked points. Let $G$ be
a complex reductive Lie group. The $G$-character variety of $X$
with marked points $x_i$ is defined as the moduli space of
representations of $\pi_1(X-\{x_1,\ldots, x_s\})$ into $G$.
For this, we fix conjugacy classes $\cC_1,\ldots, \cC_s\subset G$. The
$G$-character variety is the space
\begin{align*}
  \cR_{\cC_1,\ldots, \cC_s} (X,G)  =  
\Big\{(A_1,B_1,\ldots, A_g,B_g, & C_1,\ldots, C_s)\in G^{2g+s} \,| \\ 
 & \prod_{i=1}^g [A_i,B_i] \prod_{j=1}^s C_j=\Id, 
 C_j\in \cC_j, 1\leq j \leq s  \Big\}// G,
\end{align*}
where $G$ acts by simultaneous conjugation. This is the space
of equivalence classes of representations where the holonomy around the punctures $x_j$ has
been fixed to be of type $\cC_j$, $j=1,\ldots, s$.
If $\cC_j=[D_j]$, that is $D_j\in \cC_j$, then we may write
$\cR_{D_1,\ldots, D_s} (X,G)$ instead of $\cR_{\cC_1,\ldots, \cC_s} (X,G)$.

The topology and geometry of $G$-character varieties has been studied
extensively in the last two decades, starting with the foundational work
\cite{Hi}. Recently interest has been given to the algebro-geometric structure
of $G$-character varieties mainly because of the implications to Mirror Symmetry \cite{Ha,HaTh}. 
For this, computations of the Hodge-Deligne polynomials have
been done for a number of $G$-character varieties, mainly using arithmetic and combinatorial 
techniques \cite{HaRV,HaLeRV,Me}. In \cite{LoMuNe} a geometric method has been introduced to 
compute Hodge-Deligne polynomials of character varieties by analysing the spaces of matrices
\begin{align*}
  Z_{\cC_1,\ldots, \cC_s} & (X,G)  = \\
 =& \left\{(A_1,B_1,\ldots, A_g,B_g,C_1,\ldots, C_s)\in G^{2g+s}\,|\prod_{i=1}^g [A_i,B_i] \prod_{j=1}^s C_j=\Id, 
 C_j\in \cC_j, 1\leq j \leq s  \right\}
\end{align*}
explicitly (using geometric decompositions of the spaces). The technique is based on the work \cite{Mu} 
of the second author. In \cite{LoMuNe}, we dealt with the case $G=\SL(2,\CC)$, $s=1$ and $g=1,2$. 
In \cite{MaMu} the case $s=1$, $g=3$ is done. We address the case $s=2$, $g=1$ in this paper.
This moduli space is of interest since, when the conjugacy classes are diagonal and the 
eigenvalues have modulus different from one (see \cite[Theorem 0.4]{biquard-jardim}), it appears as
the moduli space of doubly periodic instantons. Indeed, 
the $\SL(2,\CC)$-character variety with diagonal and different conjugacy classes around the marked points is 
diffeomorphic to the moduli space of stable parabolic Higgs bundles of parabolic degree $0$ and traceless Higgs field.
This last moduli space, when the residue of the Higgs field is not nilpotent, is isomorphic, with the same complex structure, 
to the moduli space of doubly periodic instantons 
through the Nahm transform \cite{biquard-jardim, jardim}. Actually, the moduli space is hyperk\"ahler and both complex structures
-- the one given as $\SL(2,\CC)$-character variety, and the one given as moduli space of parabolic Higgs bundles 
(nilpotent or non-nilpotent) -- are
two of the complex structures in the family.

When  the conjugacy classes are diagonal and the eigenvalues  have modulus equal to one, the character variety is diffeomorphic 
to the moduli space of parabolic Higgs bundles, with nilpotent Higgs field, for which the Betti numbers are known \cite{BY}. 
As the character varieties are diffeomorphic for different values of the eigenvalues (having modulus equal to one is not relevant),
the Betti numbers are the same for the moduli space of double periodic instantons. So our results in 
Section \ref{sec:hodge} provide some Hodge numbers for the character variety diffeomorphic to the moduli space of doubly periodic instantons.

The case of several marked points has also been studied because of its relation to parabolic bundles
\cite{GPGoMu}. Here we want to address the first case of the computation of Hodge-Deligne polynomials
of character varieties for several marked points, namely the case of $G=\SL(2,\CC)$, $g=1$ and $s=2$.
For $G=\SL(2,\CC)$, 
there are five types of conjugacy classes, determined by the elements
$\Id$, $-\Id$, $J_+=\left( \begin{array}{cc} 1 & 1 \\ 0 & 1 \end{array} \right)$, 
$J_-=\left( \begin{array}{cc} -1 & 1 \\ 0 & -1 \end{array} \right)$, and
the diagonal matrices $\xi_\lambda =\left( \begin{array}{cc} \lambda & 0 \\ 0 & \lambda^{-1} 
\end{array} \right)$, for $\lambda \in \CC-\{0,\pm 1\}$. The last type is determined by
$\lambda$ up to $\lambda\sim \lambda^{-1}$.

Therefore there are $25$ possible character varieties 
 $$
  \cR_{\cC_1,\cC_2}(X, \SL(2,\CC)).
 $$
The symmetry
between $\cC_1, \cC_2$ reduces the number of cases to $15$ (see Lemma \ref{lem:symmetry}). Moreover, the cases where $\cC_1=[\Id]$ or
$\cC_2=[-\Id]$, correspond basically to the case of a one puncture elliptic curve,
computed in \cite{LoMuNe}. This means that there are $6$ cases left.
Our main result is the computation of the Hodge-Deligne polynomials of these character varieties
(see Section \ref{sec:2} for the definition of the Hodge-Deligne polynomial of an algebraic variety). 
These are as follows.

\begin{thm} \label{thm:main-result}
  We have the following:
 \begin{enumerate}
  \item $e(\cR_{J_+,J_+}(X, \SL(2,\CC))) =e(\cR_{J_-,J_-}(X, \SL(2,\CC))) =q^4+q^3-q+7$.
  \item $e(\cR_{J_+,J_-}(X, \SL(2,\CC))) = q^4-3q^2-6q$.
  \item $e(\cR_{J_+,\xi_\lambda}(X, \SL(2,\CC))) = e(\cR_{J_-,\xi_\lambda}(X, \SL(2,\CC)))= q^4 + q^3 + q^2 - 3q$. 
  \item $e(\cR_{\xi_\lambda,\xi_\mu}(X, \SL(2,\CC))) =q^4+2q^3+6q^2+2q+1$, for $\mu \neq \lambda^{\pm 1}$.
  \item $e(\cR_{\xi_\lambda,\xi_\lambda}(X, \SL(2,\CC))) =q^4+q^3+8q^2+q+1$.
 \end{enumerate}
The moduli spaces $\cR_{\cC_1,\cC_2}(X, \SL(2,\CC))$ contain reducibles in cases (1) and (5).
\end{thm}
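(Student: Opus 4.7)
The plan is to follow the geometric decomposition strategy of \cite{LoMuNe, MaMu}: first compute the $e$-polynomial of the ambient space of matrices
\begin{equation*}
  Z_{\cC_1,\cC_2}(X,\SL(2,\CC)) \;=\; \bigl\{(A,B,C_1,C_2)\,|\,[A,B]\,C_1C_2=\Id,\,C_j\in\cC_j\bigr\},
\end{equation*}
and then descend to the GIT quotient $\cR_{\cC_1,\cC_2} = Z_{\cC_1,\cC_2}//\SL(2,\CC)$, accounting separately for the reducible locus in cases (1) and (5).

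For each of the six cases, the key is to slice $Z_{\cC_1,\cC_2}$ via the auxiliary element $\xi=[A,B]=(C_1C_2)^{-1}$. Stratify $\SL(2,\CC)$ by conjugacy type into $\{\Id\}$, $\{-\Id\}$, the unipotent classes $\cC_{J_\pm}$, and the one-parameter family $\cC_{\xi_\lambda}$. Writing $\mu_1(A,B)=[A,B]$ and $\mu_2(C_1,C_2)=(C_1C_2)^{-1}$, the $G$-equivariance of both maps under simultaneous conjugation makes the restriction to each stratum $S$ locally trivial, giving
\begin{equation*}
  e(Z_{\cC_1,\cC_2}) \;=\; \sum_{S} \frac{e\bigl(\mu_1^{-1}(S)\bigr)\cdot e\bigl(\mu_2^{-1}(S)\bigr)}{e(S)}.
\end{equation*}
The $e$-polynomials of the commutator fibers $\mu_1^{-1}(S)$ are already computed in \cite{Mu, LoMuNe}, so the new input is the $e$-polynomial of the multiplication fiber $\mu_2^{-1}(S)$. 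I would fix representatives $D_j\in\cC_j$, parametrize $(C_1,C_2)=(P_1D_1P_1^{-1},P_2D_2P_2^{-1})$ with $P_j\in\SL(2,\CC)/\stab(D_j)$, and solve $P_1D_1P_1^{-1}P_2D_2P_2^{-1}=\xi^{-1}$ explicitly on each stratum by elementary linear algebra. Generically, $\mu_2^{-1}(S)$ is a $\PGL(2,\CC)$-torsor over $S$, and it degenerates on the lower-dimensional strata of $\SL(2,\CC)$, where extra care is needed when the trace equation $\tr(C_1C_2)=\tr(\xi^{-1})$ becomes degenerate or incompatible with $\cC_1,\cC_2$.

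The last step is to take the quotient by $\SL(2,\CC)$. On the irreducible locus, $\PGL(2,\CC)$ acts freely and the quotient is a geometric fibration, so $e(\cR^{\text{irr}}) = e(Z^{\text{irr}})/(q^3-q)$. Reducible representations appear precisely when the four matrices $A,B,C_1,C_2$ can be simultaneously triangularized, which forces $C_1$ and $C_2$ to lie in the same torus with compatible eigenvalues; this happens exactly in cases (1) (equal unipotent classes) and (5) (equal diagonal classes). I would describe these reducible loci explicitly as affine varieties parametrized by the diagonal entries of $A,B$ plus extension data for the off-diagonal entries, compute their $e$-polynomials directly, and add the corrections. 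The main obstacle I foresee is the bookkeeping for the degenerate fibers of $\mu_2$ over $\{\pm\Id\}$ and over the Jordan-type strata --- where $\mu_2^{-1}$ may be empty, may jump in dimension, or may break into several components depending on whether $\cC_1 = \cC_2^{\pm 1}$ --- together with the $\mathbb{Z}/2$ Weyl-group symmetry that must be factored out of the reducible locus in cases (1) and (5) to avoid double-counting conjugate representations.
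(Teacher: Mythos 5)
Your overall strategy --- stratify by the conjugacy class of $[A,B]=(C_1C_2)^{-1}$, reuse the commutator fibres $\overline{X}_i$ from \cite{LoMuNe}, compute the fibres of the multiplication map on $\cC_1\times\cC_2$ by explicit linear algebra, and then pass to the quotient --- is exactly the method of the paper (there the multiplication fibre is packaged as the set of matrices $P$ with $[A,B][P,D_1]=D_2^{-1}D_1^{-1}$, stratified by the trace). However, two of the steps as you have written them would produce wrong answers.

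First, the convolution formula $e(Z)=\sum_{S} e(\mu_1^{-1}(S))\,e(\mu_2^{-1}(S))/e(S)$ is only valid over a finite stratification into strata on which both fibrations have trivial monodromy. The diagonalisable classes form a continuous family $W_4$, and over $W_4$ the commutator fibration has \emph{nontrivial} monodromy coming from the involution $\lambda\mapsto\lambda^{-1}$: indeed $e(W_4)\,e(\overline{X}_{4,\lambda})=(q^3-2q^2-q)(q^3+3q^2-3q-1)\neq e(X_4)=q^6-2q^5-4q^4+3q^2+2q$, so the product formula already fails for the known commutator map. The paper circumvents this by working with the space $\overline{X}_4$ (recording $\lambda$) and its $\ZZ_2$-quotient, whose polynomial $e(\overline{X}_4/\ZZ_2)=q^4-2q^3-3q^2+3q+1$ is an independent equivariant input from \cite{LoMuNe}; without it your generic stratum in cases (3), (4), (5) comes out wrong.

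Second, ``compute the $e$-polynomials of the reducible loci directly and add the corrections'' is not what the GIT quotient does. All orbits containing a lower-dimensional closed orbit in their closure are identified under S-equivalence, so the extension data you propose to parametrize is collapsed, not counted. Concretely, in case (1) the reducible locus $\overline\cD\subset\overline{Z}{}_{22}$ has $e(\overline\cD)=4q^2$ but contributes exactly $4$ points to $\cR_{J_+,J_+}$ (the paper exhibits an explicit degeneration of each extension to the split representation); in case (5) the locus of simultaneously triangularizable $(A,B)$ has $e=(q-1)^2(2q^2-1)$ but contributes only $(q-1)^2$, the closed orbits being parametrized by the eigenvalues of $A$ and $B$. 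Adding the $E$-polynomial of the reducible locus, or dividing it by the group, gives the wrong constant and quadratic terms in precisely the two cases where reducibles occur. You also need to justify, not just assert, that reducibles occur \emph{only} in cases (1) and (5): e.g.\ for $\cC_1=[J_+]$, $\cC_2=[J_-]$ simultaneous triangularization forces $[A,B]=\Id$ and $C_1C_2=\Id$, which is incompatible with $C_2\in[J_-]$, $C_1^{-1}\in[J_+]$.
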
 

We recall that in a GIT quotient, the reducibles are the points with non-trivial stabilizers.
These produce lower-dimensional orbirts. All orbits which contain such lower-dimensional 
orbits in their closure must be identified in the GIT quotient (these are the semistable points, 
and the identification is commonly known as S-equivalence).

\noindent \textbf{Acknowledgements.} We thank Nigel Hitchin for asking us the question addressed in
the current paper during 
the meeting ``Nigel Hitchin LAB Retreat: Topology of moduli spaces of representations'', held in
Miraflores de la Sierra (Madrid) in March 2013. We want to acknowledge the kind invitation of
the organizers to participate, and the friendly and stimulating atmosphere during the meeting.
We thank the referee for a careful reading of the manuscript and for helpful comments
which have improved the exposition.

\section{Hodge-Deligne polynomials} \label{sec:2}

Our main goal is to compute the Hodge-Deligne polynomial of the $\SL(2,\CC)$-character variety 
of an elliptic curve with two marked points. We will follow the methods in \cite{LoMuNe}, 
so we collect some basic results from \cite{LoMuNe} in this section.

We start by reviewing the definition of the Hodge-Deligne polynomial. 
A pure Hodge structure of weight $k$ consists of a finite dimensional complex vector space
$H$ with a real structure, and a decomposition $H=\bigoplus_{k=p+q} H^{p,q}$
such that $H^{q,p}=\overline{H^{p,q}}$, the bar meaning complex conjugation on $H$.
A Hodge structure of weight $k$ gives rise to the so-called Hodge filtration, which is a descending filtration
$F^{p}=\bigoplus_{s\ge p}H^{s,k-s}$. We define $\Gr^{p}_{F}(H):=F^{p}/ F^{p+1}=H^{p,k-p}$.

A mixed Hodge structure consists of a finite dimensional complex vector space $H$ with a real structure,
an ascending (weight) filtration $\ldots \subset W_{k-1}\subset W_k \subset \ldots \subset H$
(defined over $\RR$) and a descending (Hodge) filtration $F$ such that $F$ induces a pure Hodge structure
of weight $k$ on each $\Gr^{W}_{k}(H)=W_{k}/W_{k-1}$. We define
 $$
 H^{p,q}:= \Gr^{p}_{F}\Gr^{W}_{p+q}(H)
 $$
and write $h^{p,q}$ for the {\em Hodge number} $h^{p,q} :=\dim H^{p,q}$.

Let $Z$ be any quasi-projective algebraic variety (maybe non-smooth or non-compact). 
The cohomology groups $H^k(Z)$ and the cohomology groups with compact support  
$H^k_c(Z)$ are endowed with mixed Hodge structures \cite{De}. 
We define the {\em Hodge numbers} of $Z$ by
 \begin{eqnarray*}
 h^{k,p,q}(Z)&=&h^{p,q}(H^k(Z))=\dim \Gr^{p}_{F}\Gr^{W}_{p+q}H^{k}(Z) ,\\
 h^{k,p,q}_{c}(Z)&=&h^{p,q}(H_{c}^k(Z))=\dim \Gr^{p}_{F}\Gr^{W}_{p+q}H^{k}_{c}(Z) .
 \end{eqnarray*}

The Hodge-Deligne polynomial, or $E$-polynomial is defined as 
 $$
 e(Z)=e(Z)(u,v):=\sum _{p,q,k} (-1)^{k}h^{k,p,q}_{c}(Z) u^{p}v^{q}.
 $$

When $h_c^{k,p,q}=0$ for $p\neq q$, the polynomial $e(Z)$ depends only on the product $uv$.
This will happen in all the cases that we shall investigate here. In this situation, it is
conventional to use the variable $q=uv$. If this happens, we say that the variety is {\it of balanced type}.
For instance, $e(\CC^n)=q^n$.

The key property of Hodge-Deligne polynomials that permits their calculation is that they are additive for
stratifications of $Z$. If $Z$ is a complex algebraic variety and
$Z=\bigsqcup_{i=1}^{n}Z_{i}$, where all $Z_i$ are locally closed in $Z$, then $e(Z)=\sum_{i=1}^{n}e(Z_{i})$,

\begin{prop}[Proposition 2.4 in \cite{LoMuNe}]\label{prop:basics2}
Suppose that $B$ is connected and $\pi:Z\to B$ is an algebraic fibre bundle with fibre $F$ 
(not necessarily locally trivial in the Zariski topology) and that the action of $\pi_1(B)$ on $H_c^*(F)$ is trivial. 
Supposse that $Z,F,B$ are smooth. Then $e(Z)=e(F)e(B)$.
\end{prop}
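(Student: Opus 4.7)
The plan is to use the Leray spectral sequence for compactly supported cohomology, lifted to the category of mixed Hodge structures. For the fibration $\pi : Z \to B$, one has
$$E_2^{p,q} = H^p_c(B, R^q\pi_! \CC_Z) \Longrightarrow H^{p+q}_c(Z),$$
and $R^q\pi_!\CC_Z$ is the local system on $B$ whose fibre at $b\in B$ is $H^q_c(\pi^{-1}(b))\cong H^q_c(F)$. The hypothesis that $\pi_1(B)$ acts trivially on $H^*_c(F)$ forces this local system to be constant, so the $E_2$-page factors as
$$E_2^{p,q} = H^p_c(B)\otimes H^q_c(F).$$

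Passing to the Grothendieck group $K_0(\mathrm{MHS})$ of mixed Hodge structures, which is a ring, the $E$-polynomial is the image of the Euler characteristic $\sum_k (-1)^k [H^k_c(\,\cdot\,)]$ under the ring homomorphism $K_0(\mathrm{MHS})\to \ZZ[u,v]$ sending $[H]\mapsto \sum_{p,q} h^{p,q}(H)\, u^p v^q$. Since in an abelian category the alternating sum over a convergent spectral sequence is preserved from $E_2$ to $E_\infty$, and the alternating sum of the graded pieces of a filtration equals that of the filtered object, one concludes
$$\sum_k (-1)^k [H^k_c(Z)] \;=\; \sum_{p,q} (-1)^{p+q}[E_2^{p,q}] \;=\; \Bigl(\sum_p (-1)^p [H^p_c(B)]\Bigr)\Bigl(\sum_q (-1)^q [H^q_c(F)]\Bigr),$$
and applying $e$ gives $e(Z)=e(B)\,e(F)$.

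The main obstacle is the nontrivial technical fact that the Leray spectral sequence is actually a spectral sequence in the abelian category of mixed Hodge structures, i.e.\ that the differentials $d_r$ are morphisms of MHS and that the $E_\infty$-page computes the graded pieces of a weight-and-Hodge filtration on $H^*_c(Z)$ compatible with the canonical one. This requires Deligne's construction (or, more uniformly, Saito's theory of mixed Hodge modules), and uses the smoothness of $Z$, $F$, $B$ so that the MHS on compactly supported cohomology behaves predictably. In practice one can often bypass this input: whenever the bundle admits a Zariski stratification $B=\bigsqcup B_i$ on which $\pi$ becomes trivial, so that $\pi^{-1}(B_i)\cong B_i\times F$, additivity of $e$ together with the Künneth formula $e(B_i\times F)=e(B_i)\,e(F)$ yields the conclusion directly, and this is the elementary route taken in the concrete strata appearing later in the paper.
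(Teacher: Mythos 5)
Your argument via the compactly supported Leray spectral sequence, upgraded to mixed Hodge structures and collapsed by the alternating-sum trick, is correct and is essentially the proof of this statement; note that the present paper does not prove the proposition itself but imports it from \cite{LoMuNe}, where the argument given is precisely this spectral-sequence one. Your closing remark about the elementary Zariski-stratification bypass also matches how the proposition is actually deployed in the rest of the paper.
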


The hypotheses of Proposition \ref{prop:basics2} hold in particular in the following cases:
\begin{itemize}
\item $B$ is irreducible and $\pi$ is locally trivial in the Zariski topology. 
\item $\pi$ is a principal $G$-bundle with $G$ a connected algebraic group. 
\end{itemize}
We shall use the above in the following form. Suppose that $Z$ is a space with a
free action of an algebraic group $G$, 
$H\subset G$ is a connected subgroup and $\overline{Z}{}\subset Z$ is
a subset such that $G\overline{Z}{}=Z$ and 
 \begin{equation}\label{eqn:referee}
 Hz_0=G z_0\cap \overline{Z}{}, \ \text{for any} \ z_0\in \overline{Z}{}.
 \end{equation}
Then, in particular $Z/G=\overline{Z}{}/H$. In this case we have an $H$-bundle $G\x \overline{Z}{} \to Z$.
Applying the above, $e(Z)=e(G\x \overline{Z}{})/e(H)$. Hence we can write
 \begin{equation}\label{eqn:Z}
 e(Z)=e(\overline{Z}{}) e(G/H).
 \end{equation}

We need to recall some Hodge-Deligne polynomials from \cite{LoMuNe}. First, 
we have that $e(\SL(2,\CC))=q^3-q$ and $e(\PGL(2,\CC))=q^3-q$.
Consider the following subsets of $\SL(2,\CC)$:
\begin{itemize}
\item $W_{0}:=$ conjugacy class of $\left(
             \begin{array}{cc}
               1 & 0 \\
               0 & 1
             \end{array}
           \right)$. It has $e(W_0)=1$.
\item $W_{1}:=$ conjugacy class of  $\left(
             \begin{array}{cc}
               -1 & 0 \\
               0 & -1
             \end{array}
           \right)$. It has $e(W_1)=1$.
\item $W_{2}:=$ conjugacy class of $J_+=\left(
              \begin{array}{cc}
                1 & 1 \\
                0 & 1
              \end{array}
            \right)$. It is $W_2 \cong \PGL(2,\CC)/U$, with $U=\left\{ \left(
              \begin{array}{cc}
                1 & y \\
                0 & 1
              \end{array} \right) \,|\, y\in\CC \right\}$. It has $e(W_2)=q^2-1$.
\item $W_{3}:=$ conjugacy class of $J_-=\left(
              \begin{array}{cc}
                -1 & 1 \\
                0 & -1
              \end{array}
            \right)$. It is $W_3\cong \PGL(2,\CC)/U$ and  $e(W_3)=q^2-1$.
\item $W_{4,\lambda}:=$ conjugacy class of
  $\xi_\lambda=\left(
              \begin{array}{cc}
                \lambda & 0 \\
                0 & \lambda^{-1}
              \end{array}
            \right)$, where $\lambda\in\CC-\{0,\pm1\}$. Note that $W_{4,\lambda}=W_{4,\lambda^{-1}}$,
since the matrices $\xi_\lambda$ and $\xi_{\lambda^{-1}}$ are conjugated.
We have $W_{4,\lambda} \cong \PGL(2,\CC)/D$, where $D=\left\{ \left(
              \begin{array}{cc}
                x & 0 \\
                0 & x^{-1}
              \end{array} \right) \,|\, x\in\CC^* \right\}$. So $e(W_{4,\lambda})=q^2+q$.
\item We also need the set $W_{4}:=\{A\in\SL(2,\CC)\, | \, \Tr(A)\ne\pm2\}$,
which is the union of the conjugacy classes $W_{4,\lambda}$, $\lambda \in \CC- \{0,\pm 1\}$.
This has $e(W_4)=e(\SL(2,\CC))-e(W_0)-e(W_1)-e(W_2)-e(W_3)= q^3-2q^2-q$.
\end{itemize}

Now consider the map
\begin{eqnarray*}
f:\SL(2,\CC)^{2}&\longrightarrow & \SL(2,\CC)\\
(A,B)&\mapsto& [A,B]=ABA^{-1}B^{-1}
\end{eqnarray*}
Note that $f$ is equivariant under the action of $\SL(2,\CC)$ by conjugation on both spaces. 
We stratify $X=\SL(2,\CC)^{2}$ as follows
\begin{itemize}
\item $X_{0}:=f^{-1}(W_{0})$,
\item $X_{1}:=f^{-1}(W_{1})$,
\item $X_{2}:=f^{-1}(W_{2})$,
\item $X_{3}:=f^{-1}(W_{3})$,
\item $X_{4}:=f^{-1}(W_{4})$.
\end{itemize}
We also introduce the varieties $f^{-1}(C)$ for fixed $C\in \SL(2,\CC)$ and define accordingly
 \begin{itemize}
\item $\overline{X}_{0}:=f^{-1}(\Id)=X_0$,
\item $\overline{X}_{1}:=f^{-1}(-\Id)=X_1$,
\item $\overline{X}_{2}:=f^{-1}(J_+)$. Then there is a fibration $U \to \PGL(2,\CC) \x \overline{X}_2 \to X_2$,
and by (\ref{eqn:Z}), $e(X_2)=(q^2-1)e(\overline{X}_2)$. Note that the action of $\PGL(2,\CC)$ on $X_2$ is
free because there are no reducibles (see \cite{LoMuNe}).
\item $\overline{X}_{3}:=f^{-1}(J_-)$. Again there is a fibration $U \to \PGL(2,\CC) \x \overline{X}_3\to X_3$,
and $e(X_3)=(q^2-1)e(\overline{X}_3)$.
\item $\overline{X}_{4,\lambda}:=f^{-1}(\xi_\lambda)$, for $\lambda\ne 0,\pm1$.
We define also $X_{4,\lambda}=f^{-1}(W_{4,\lambda})$. There is a fibration 
$D\to \PGL(2,\CC) \x\overline{X}_{4,\lambda} \to X_{4,\lambda}$,
and $e(X_{4,\lambda})=(q^2+q)e(\overline{X}_{4,\lambda})$.
\end{itemize}

It will also be convenient to define
\begin{itemize}
\item $\overline{X}_{4}:=\left\{(A,B,\lambda)\, | \, [A,B]=\left(\begin{array}{cc}
      \lambda & 0 \\
      0 & \lambda^{-1}\\
    \end{array}
  \right), \lambda\ne0, \pm1,\, A,\,B \in \SL(2,\CC)\right\}$.
\end{itemize}
There is an action of $\ZZ_2$ on $\overline{X}_4$ given by interchanging 
$(A,B,\lambda)\mapsto (P^{-1}AP, P^{-1}BP,\lambda^{-1})$, with $P=
\left(\begin{array}{cc}
      0 & 1 \\
      1 & 0\\
    \end{array} \right)$. The map $\overline{X}_4 \to \overline{X}_4/\ZZ_2$ is
equivalent to the map $(A,B,\lambda) \to (A,B)$, so 
  $$
 \overline{X}_{4}/\ZZ_2 = \left\{(A,B)\, | \, [A,B]=\left(\begin{array}{cc}
      \lambda & 0 \\
      0 & \lambda^{-1}\\
    \end{array}
  \right), \lambda\ne0, \pm1,\, A,\,B \in \SL(2,\CC)\right\}
 $$
coincides with the union of all $\overline{X}_{4,\lambda}$.

The Hodge-Deligne polynomials computed in \cite{LoMuNe} are as follows:
\begin{align*}
 e(X_{0}) &=  q^{4}+4q^{3}-q^2-4q \\
 e(X_1) &=e( \PGL(2,\CC))=q^{3}-q \\
 e(\overline{X}_2) &= q\big( (q-1)^2-4 \big) = q^3-2q^2-3 q \\
 e(X_{2}) &= (q^2-1) (q^3-2q^2-3q) =q^{5}-2q^{4}-4q^{3}+2q^{2}+3q \\
 e(\overline{X}_3) & =q(q^2+3q)=q^3 +3 q^2 \\
 e(X_{3}) & = (q^2-1)(q^3+3q^2)=q^5  + 3 q^4 - q^3 -3 q^2 \\
 e(\overline{X}_{4,\lambda}) &=(q-1)(q^2+4q+1)=q^3 + 3 q^2 - 3 q -1 \\
 e({X}_{4,\lambda}) &= (q^2+q)(q^3+3q^2-3q-1)=q^5+4q^4-4q^2-q \\
 e(\overline{X}_4) &= q^4-3q^3-6q^2+5q+3 \\
 e(\overline{X}_4/\ZZ_2) &= q^4-2q^3-3q^2+3q+1 \\
 e(X_{4}) & =q^6 - 2 q^5 - 4 q^4 + 3 q^2 +2 q 
\end{align*}

We will call holonomies of Jordan type those which belong to one of the conjugacy classes $J_{+}$ and 
$J_{-}$, and of diagonalisable type those which belong to $ \Id$, $-\Id$ or $\xi_{\lambda}$.

\section{Holonomies of Jordan type}

Let $\cC_1,\cC_2$ be conjugacy classes in $\SL(2,\CC)$. We want to study the set
$$
 Z(\cC_1,\cC_2)=\{ (A,B,C_1,C_2) \in \SL(2, \CC) \, | \,  [A,B]C_1C_2=\Id, C_1\in \cC_1, C_2\in \cC_2\}
$$
and 
 $$
\cR_{\cC_1, \cC_2}=Z(\cC_1,\cC_2) // \PGL(2,\CC).
 $$
 
Recall that there are five types of conjugacy classes in $\SL(2,\CC)$, namely
$\Id$, $-\Id$, $J_+=\left(\begin{array}{cc}1& 1  \\ 0 & 1\end{array}\right)$,
$J_-=\left(\begin{array}{cc}-1& 1  \\ 0 & -1\end{array}\right)$,
and $\xi_\lambda=\left(\begin{array}{cc}\lambda & 0  \\ 0 & \lambda^{-1}
\end{array}\right)$, for $\lambda\neq 0,\pm 1$, and defined up to $\lambda \sim \lambda^{-1}$. 
Therefore there is a total of $25$ possible combinations for $\cC_1,\cC_2$. The following symmetry 
reduces the number of cases.

\begin{lem} \label{lem:symmetry}
$Z(\cC_1,\cC_2) \cong  Z(\cC_2,\cC_1)$.
\end{lem}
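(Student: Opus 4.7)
The plan is to exhibit an explicit algebraic isomorphism by rewriting the constraint $[A,B]C_1C_2 = \Id$ in a way that swaps the roles of $\cC_1$ and $\cC_2$. The naive map $(A,B,C_1,C_2)\mapsto(A,B,C_2,C_1)$ fails because the relation is not symmetric in the $C_j$, so we must compensate by a conjugation.

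The key observation is that if $[A,B]C_1C_2=\Id$, then inserting $C_2 C_2^{-1}$ yields
\[
 [A,B]\cdot C_2\cdot (C_2^{-1}C_1C_2)=[A,B]C_1C_2=\Id,
\]
and $C_2^{-1}C_1C_2$ still lies in the conjugacy class $\cC_1$. Thus the map
\[
 \Phi\colon Z(\cC_1,\cC_2)\longrightarrow Z(\cC_2,\cC_1),\qquad
 (A,B,C_1,C_2)\longmapsto (A,B,\,C_2,\,C_2^{-1}C_1C_2)
\]
is well defined. It is evidently a morphism of algebraic varieties since it is given by polynomial expressions in the matrix entries (using that inversion in $\SL(2,\CC)$ is polynomial).

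Next I would construct the inverse map
\[
 \Psi\colon Z(\cC_2,\cC_1)\longrightarrow Z(\cC_1,\cC_2),\qquad
 (A,B,C_2',C_1')\longmapsto (A,B,\,C_2'C_1'C_2'^{-1},\,C_2'),
\]
which is again polynomial. A direct check shows $\Psi\circ\Phi=\mathrm{id}$ and $\Phi\circ\Psi=\mathrm{id}$: for the first composition, starting from $(A,B,C_1,C_2)$ we obtain $(A,B,C_2 (C_2^{-1}C_1C_2)C_2^{-1},C_2)=(A,B,C_1,C_2)$, and the second composition is entirely analogous.

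There is no genuine obstacle here; this is essentially a bookkeeping lemma. The only point requiring mild care is to confirm that the conjugated matrix $C_2^{-1}C_1C_2$ indeed belongs to $\cC_1$ (which is immediate, as $\cC_1$ is a full conjugacy class) and that the product relation is preserved, both of which are handled by the computation above. Consequently $\Phi$ is an isomorphism of algebraic varieties, proving the lemma.
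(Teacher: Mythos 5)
Your proof is correct, but it proceeds by a genuinely different isomorphism than the paper's. You keep $(A,B)$ fixed and move the conjugation entirely onto the punctures, sending $(C_1,C_2)\mapsto (C_2,\,C_2^{-1}C_1C_2)$; the paper instead inverts the relation $[A,B]=C_2^{-1}C_1^{-1}$, swapping and inverting both the commutator entries and the $C_j$, via $(A,B,C_1,C_2)\mapsto (B^{-1},A^{-1},C_2^{-1},C_1^{-1})$. Your map has two small advantages: it works verbatim for any reductive $G$ and any conjugacy classes, whereas the paper's map needs the additional observation (which it does record) that each conjugacy class of $\SL(2,\CC)$ is stable under $C\mapsto C^{-1}$; and it is literally the identity on the $(A,B)$ factor, so it is transparently compatible with the stratification by the conjugacy type of $[A,B]$ used throughout the rest of the paper. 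Both maps are equivariant for the simultaneous conjugation action of $\PGL(2,\CC)$ (for yours, $P\cdot(C_2^{-1}C_1C_2)P^{-1}=(PC_2P^{-1})^{-1}(PC_1P^{-1})(PC_2P^{-1})$), so both descend to an isomorphism of the GIT quotients $\cR_{\cC_1,\cC_2}\cong\cR_{\cC_2,\cC_1}$, which is what the case reduction actually requires; it would be worth making that one-line equivariance check explicit.
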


\begin{proof}
 The equation $[A,B]C_1C_2=\Id$ is equivalent to $[A,B]=C_2^{-1}C_1^{-1}$. This can be inverted to
give $[B^{-1},A^{-1}]=C_1C_2$, i.e. $[B^{-1},A^{-1}]C_2^{-1}C_1^{-1}=\Id$.
So the map $(A,B,C_1,C_2)\mapsto (B^{-1},A^{-1}, C_2^{-1},C_1^{-1})$ gives the required isomorphism.
Note that when $C\in \cC$, $C^{-1}$ runs over $\cC$ again. 
\end{proof}

If $\cC_1=W_0=\{\Id\}$, then 
\begin{itemize}
 \item $Z_{0j}=Z(W_0,W_j)=X_j$, $j=0,1,2,3$
 \item $Z_{04}^\lambda=Z(W_0,W_{4,\lambda})=X_{4,\lambda}$
 \item $Z_{04}=\bigcup Z_{04}^\lambda= X_4$
\end{itemize}

If $\cC_1=W_1=\{-\Id\}$, then 
\begin{itemize}
 \item $Z_{11}=Z(W_1,W_1)=X_0$
 \item $Z_{12}=Z(W_1,W_2)=X_3$
 \item $Z_{13}=Z(W_1,W_3)=X_2$
 \item $Z_{14}^\lambda=Z(W_0,W_{4,\lambda})=X_{4,-\lambda}$
 \item $Z_{14}=\bigcup Z_{14}^\lambda=X_4$
\end{itemize}

The Hodge-Deligne polynomials have been computed in \cite{LoMuNe}. They are 
\begin{itemize}
 \item $e(Z_{00})=e(Z_{11})=q^4+4q^3-q^2-4q$
 \item $e(Z_{01})=q^3-q$
 \item $e(Z_{02})=e(Z_{13})=q^5-2q^4-4q^3+2q^2+3q$
 \item $e(Z_{03})=e(Z_{12})=q^5+3q^4-q^3-3q^2$
 \item $e(Z_{04}^\lambda)=e(Z_{14}^\lambda)=q^5+4q^4-4q^2-q$
\end{itemize}
and the Hodge-Deligne polynomials of the character varieties are \cite[Theorem 1.1]{LoMuNe}
\begin{itemize}
 \item $e(\cR_{\Id,\Id})=e(\cR_{-\Id,-\Id}) =q^2+1$
 \item $e(\cR_{\Id,-\Id}) =1$
 \item $e(\cR_{\Id,J_+})=e(\cR_{-\Id,J_-}) = q^2-2q+3$
 \item $e(\cR_{\Id,J_-})=e(\cR_{-\Id,J_+}) = q^2+3q$
 \item $e(\cR_{\Id,\xi_\lambda})=e(\cR_{-\Id,\xi_\lambda}) = q^2+4q+1$
\end{itemize}

Now we move to the cases when the holonomy around the punctures is given by the Jordan forms.

\subsection{The case $\cC_1=W_2=[J_+]$, $\cC_2=W_2=[J_+]$} \label{subsec:first}
Let
\begin{align*}
 Z(W_2,W_2) &=Z_{22}= \{(A,B,C_1,C_2) \, | \, C_1,C_2\in W_2, [A,B]C_1=C_2^{-1}\},  \\
 \overline{Z}{}(W_2,W_2) &=\overline{Z}{}_{22}= \{(A,B,C)\, | \, C\in W_2, [A,B]C=J_+\} .
 \end{align*}
The action of $\PGL(2,\CC)$ on $Z_{22}$ is free except when a non-trivial element $P$ fixes
simultaneously $A,B,C_1,C_2$. Write $C_2=QJ_+Q^{-1}$, for some $Q\in \PGL(2,\CC)/U$.
Then $P\in QUQ^{-1}$, and hence $A,B \in Q (U\cup (-U)) Q^{-1}$. So $[A,B]=\Id$ and $C_1=C_2^{-1}$.
Analogously, the action of $U$ on points of $\overline{Z}{}_{22}$
is free except when $A,B \in U\cup (-U), C=J_+$. The set of 
reducibles is thus
\begin{align*}
  \cD &=\{(A,B,C)| A,B\in Q (U\cup (-U)) Q^{-1}, C_1^{-1}=C_2 =QJ_+Q^{-1} \in W_2\} \subset Z_{22} , \\
  \overline\cD &=\{(A,B,C)| A,B\in U\cup (-U), C=J_+\}  \subset \overline{Z}{}_{22}.
 \end{align*}
Denote by $Z_{22}^*=Z_{22}-\cD$ and $\overline{Z}{}_{22}^*=\overline{Z}{}_{22}-\overline\cD$
the set of irreducible representations. Then $\PGL(2, \CC)$ acts freely on $Z_{22}^*$, 
clearly $\PGL(2,\CC) \, \overline{Z}{}_{22}={Z}{}_{22}$, and also  Condition \eqref{eqn:referee} 
holds: if $(A,B,C)\in \overline{Z}{}_{22}$ and $P\in \PGL(2,\CC)$ satisfies that
 $(PAP^{-1},PBP^{-1},PCP^{-1})\in \overline{Z}{}_{22}$, then $P$ fixes $J_+$, so $P\in U$. Therefore
there is a fibration 
 $$
U  \to \PGL(2,\CC) \x \overline{Z}{}_{22}^* \to Z_{22}^*,
$$ 
so $e(Z_{22}^*)=(q^2-1) e(\overline{Z}{}_{22}^*)$.

Let $(A,B,C)\in \overline{Z}{}_{22}$.
As $C\sim J_+$, there is some $P\in \SL(2,\CC)$, well-defined up to sign and up to multiplication by $U$
on the right, such that $C=P J_+ P^{-1}$. The equation $[A,B]P  J_+ P^{-1}= J_+$ is rewritten
as $[A,B][P, J_+]=\Id$. So 
$$
 \overline{Z}{}_{22} = \{(A,B,P)\in \SL(2,\CC)^3  | \, [A,B][P, J_+]=\Id\} / \ZZ_2 \x U ,
 $$
where $\ZZ_2$ acts by $P\mapsto -P$,
and $U$ acts by multiplication on the right on $P$. This is a free action.

Write $P=\left( \begin{array}{cc}x& y  \\ z & w\end{array}\right)$,
then we have 
 \begin{equation}\label{eqn:are}
 [P, J_+]=\left( \begin{array}{cc}1-xz & -1+x(x+z)  \\ -z^2 & 1+(x+z)z \end{array}\right).
 \end{equation}
Note that the trace is
 $$
t=\Tr [P, J_+]=2+z^2\, .
 $$
The action of $U$ on $P$ moves $y$ and $w$, so the class of $P$ modulo $\ZZ_2\x U$
is determined by $(x,z)\in \CC^2-\{0\}$, up to sign. 
Hence 
 $$
 \overline{Z}{}_{22} \cong \{(A,B,(x,z))\in \SL(2,\CC)^2 \x ((\CC^2-\{0\})/\pm) \,  | \, [A,B]^{-1}=[P, J_+]\} .
 $$
The space  $\overline{Z}{}_{22}$ is stratified as follows:
\begin{enumerate}
\item If $z=0$ then $x\neq 0$. So $P=\left( \begin{array}{cc}x& y  \\ 0 & x^{-1} \end{array}\right)$.
For $x\neq \pm 1$, we have $[P,J_+]\sim J_+$ so $(A,B)\in \overline X_2$.
Hence the contribution to the Hodge-Deligne polynomial is
$e((\CC^*-\{\pm 1\})/\pm) e(\overline X_2)=(q-2) e(\overline X_2)$.
\item $z=0$ and $x=\pm 1$. As it is defined up to sign, we can arrange $x=1$. 
So $P=\Id$, and $(A,B)\in X_0$. The contribution is $e(X_0)$.
\item $z=\pm 2i$. As it is defined up to sign, can choose $z=2i$. Then $[P,J_+]\sim J_-$, and hence 
the contribution is $q\, e(\overline X_3)$.
\item $z\neq 0,\pm 2i$. Now we have a fibration $(A,B) \mapsto t=2+z^2$, where $z$ is defined up to sign,
so $z^2\in \CC-\{0, -4\}$. Take $v=xz$, $v\in \CC$. So
$[P, J_+]=\left( \begin{array}{cc} 1-v & * \\ -z^2 & 1+v+z^2 \end{array}\right)$. Hence the
Hodge-Deligne polynomial is $e(\CC) e(\overline X_4/\ZZ_2)$.
\end{enumerate}

Putting all together, and using $e(\overline\cD)=4q^2$, we have
 \begin{align*}
 e(\overline{Z}{}_{22}) &= (q-2) e(\overline X_2) + q e(\overline X_3)+e(X_0) + q e(\overline X_4/\ZZ_2) \\
 &= q^5+q^4+3q^2+3q, \\
e(\overline{Z}{}_{22}^*) &=e(\overline{Z}{}_{22}) - e(\overline\cD) =q^5+q^4- q^2+3q \\
 e(Z_{22}^*) &= (q^2-1) e(\overline{Z}{}_{22}^*) = q^7+q^6-q^5-2q^4+3q^3+q^2-3q.
\end{align*}

Finally, we compute the Hodge-Deligne polynomial of $\cR_{W_2,W_2}=Z_{22}//\PGL(2,\CC)$.
It is clear that $Z_{22}^*/\PGL(2,\CC)=\overline{Z}{}_{22}^*/U$. The contribution of the
reducibles is as follows. First note that $\cD/\PGL(2,\CC)=\overline\cD/U$. Now
let $(A,B,C)\in \overline\cD$. Then $A,B\in U\cup (-U), C=J_+$.
If $A=\left(\begin{array}{cc} 1 & a\\ 0 & 1\end{array}\right)$,
$B=\left(\begin{array}{cc} 1 & b\\ 0 & 1\end{array}\right)$, we consider
$A'=\left(\begin{array}{cc} x & a\\ 0 & x^{-1}\end{array}\right)$,
$B'=\left(\begin{array}{cc} y & b\\ 0 & y^{-1}\end{array}\right)$, with $(x-x^{-1})b =(y-y^{-1})a=:\eta ab$,
so $[A',B']=[A,B]$. 
When $x,y\to 1$, we have $A'\to A, B'\to B$.
The action of $\left(\begin{array}{cc} 1 & \alpha \\ 0 & 1\end{array}\right)$ on $A',B'$ takes
$(a,b) \mapsto (a+\alpha(x-x^{-1}), b + \alpha(y-y^{-1}))=(a+\alpha\eta a, b + \alpha\eta b)$.
So going to the limit $x,y\to 1$, $(a,b) \sim (a+\alpha\eta a, b + \alpha\eta b)$.
For $\alpha=-\eta^{-1}$, they converge to $(\Id, \Id)$. Taking into account the possible signs,
this means that the contribution of $\overline\cD$ in the quotient consists of $4$ points.

So the contribution is
 \begin{align*}
 e(\cR_{W_2,W_2}) &= e(\overline{Z}{}_{22}^*)/e(U) + 4 =q^4+q^3-q+7.
 \end{align*}

\subsection{The case $\cC_1=W_3=[J_-]$, $\cC_2=W_3=[J_-]$}\label{subsec:referee},
There is an isomorphism  
 \begin{align*}
   Z(\cC_1,\cC_2) &= \{(A,B,C_1,C_2)\, | \, C_1 \in \cC_1,C_2\in \cC_2, [A,B]C_1C_2=\Id\}\\
    &= \{(A,B,C_1,C_2)\, | \, C_1 \in \cC_1,C_2\in \cC_2, [A,B](-C_1)(-C_2)=\Id\}\\
  &= Z(-\cC_1,-\cC_2).
 \end{align*}
In particular
 $$
 Z(W_3,W_3)=Z(W_2,W_2),
 $$
using that $W_3=[J_-]=[-J_+]=-W_2$.

Therefore
 \begin{align*}
 e(\cR_{W_3,W_3}) & =q^4+q^3-q+7.
 \end{align*}

\subsection{The case $\cC_1=W_2=[J_+]$, $\cC_2=W_3=[J_-]$}\label{subsec:third}
 Now we choose $W_2=[J_+]$ but $W_3=[-J_+]$, using that $-J_+ \sim J_-$. So let
 \begin{align*}
  Z({W_2,W_3}) &= Z_{23}= \{(A,B,C_1,C_2)\, | \, C_1\in [J_+] ,C_2\in [J_+], [A,B]C_1=-C_2^{-1}\}, \\
 \overline{Z}{}({W_2,W_3}) &= \overline{Z}{}_{23}= \{(A,B,C) \, | C\in W_2, \, [A,B]C=-J_+\}.
 \end{align*}
So 
 $$
 \overline{Z}{}({W_2,W_3}) = \overline{Z}{}_{23}= \{(A,B,P) \, | \, [A,B][P, J_+]=-\Id\}/\ZZ_2\x U.
 $$
Note that  $\PGL(2, \CC)$ acts freely on $Z_{23}$, since if a non-trivial $P$ fixes $(A,B,C_1,C_2)$ then 
$[A,B]=\Id$, and this cannot be possible. Also  Condition \eqref{eqn:referee} holds here, which is proved 
as in Subsection \ref{subsec:first}. This means that 
there is a fibration $U  \to \PGL(2,\CC) \x \overline{Z}{}_{23} \to Z_{23}$, so
$e(Z_{23})= (q^2-1) e(\overline{Z}{}_{23})$.  

Writing $P=\left( \begin{array}{cc}x& y  \\ z & w\end{array}\right)$,
we have 
 $$
 [P, J_+]=\left( \begin{array}{cc}1-xz & -1+x(x+z)  \\ -z^2 & 1+(x+z)z \end{array}\right),
 $$
and the trace is
 $$
t=\Tr [P, J_+]=2+z^2.
 $$

The space $\overline{Z}{}_{23}$ is stratified as follows:
\begin{enumerate}
\item If $z=0$ and $x\neq \pm 1$, then $[P,J_+]\sim J_+$ so $[A,B]\sim J_-$ and $(A,B)\in \overline X_3$.
As $x$ is defined up to sign, we get the contribution
$e((\CC^*-\{\pm 1\})/\pm) e(\overline X_3)=(q-2)e(\overline X_3)$. 
\item $z=0$ and $x=\pm 1$. As it is defined up to sign, we can arrange $x=1$. So $[P,J_+]=\Id$ 
and $(A,B)\in X_1$. So we get the contribution $e(X_1)$.
\item If $z=\pm 2i$, as it is defined up to sign, can choose $z=2i$. 
Then $[P,J_+]\sim J_-$. As $x\in \CC$, we have the contribution $q\, e(\overline X_2)$.
 \item $z\neq 0,\pm 2i$. Now we have a fibration $(A,B) \mapsto t=2+z^2$, where $z$ is defined up to sign,
so $z^2\in \CC-\{0, -4\}$. Take $v=xz$, $v\in \CC$. So
$[P, J_+]=\left( \begin{array}{cc} 1-v & * \\ -z^2 & 1+v+z^2 \end{array}\right)$. Hence the
Hodge-Deligne polynomial is $e(\CC) e(\overline X_4/\ZZ_2)$.
\end{enumerate}

Putting all together, 
 \begin{align*}
 e(\overline{Z}{}_{23}) &= (q-2) e(\overline X_3)+e(X_1) + q e(\overline X_2)+ q e(\overline X_4/\ZZ_2) \\
 &= q^5-3q^3-6q^2, \\
 e(Z_{23}) &= (q^2-1) e(\overline{Z}{}_{23}) = q^7-4q^5-6q^4+3q^3+6q^2.
\end{align*}

Finally, we want to compute the Hodge-Deligne polynomial of $\cR_{W_2,W_3}=Z_{23}//\PGL(2,\CC)$.
In this case the action is free, and there are no reducibles. So 
 $$
 e(\cR_{W_2,W_3})= q^4-3q^2-6q.
 $$

\section{One holonomy of Jordan type and the other of diagonalisable type}

\subsection{The case $\cC_1=W_2$, $\cC_2=W_{4,\lambda}$}\label{subsec:previous}
Now we have
\begin{align*}
  Z(W_2, W_{4,\lambda}) &= Z_{24}^\lambda =  \{ (A,B,C_1,C_2) \,|\, C_1\in W_2, C_2\in W_{4,\lambda}, 
  [A,B]C_1=C_2^{-1}\}, \\
  \overline{Z}{}(W_2, W_{4,\lambda}) &= \overline{Z}{}_{24}^\lambda =  \{ (A,B,C) \,|\, C\in W_2, 
  [A,B]C= D \},
\end{align*}
where $D=\left( \begin{array}{cc}\lambda^{-1} & 0  \\ 0 & \lambda \end{array}\right)$. The action of
$\PGL(2,\CC)$ is free as in Subsection \ref{subsec:third}, and  Condition \eqref{eqn:referee} holds again, which is proved 
as in Subsection \ref{subsec:first}. Therefore
there is a fibration $\CC^*  \to \PGL(2,\CC) \x \overline{Z}{}_{24}^\lambda \to Z_{24}^\lambda$, and
$e(Z_{24}^\lambda)= (q^2+q) e(\overline{Z}{}_{24}^\lambda)$. 

Writing $C=P J_+P^{-1}$, we get
 $$
  \overline{Z}{}_{24}^\lambda \cong  \{ (A,B,P) \,|\, [A,B][P, J_+]= D J_+^{-1}\} / \ZZ_2\x U ,
 $$
where $D J_+^{-1}=\left( \begin{array}{cc}\lambda^{-1} & -\lambda^{-1}  \\ 0 & \lambda \end{array}\right)$.

Let $[A,B]=\left( \begin{array}{cc}a& b  \\ c & d\end{array}\right)$, so 
 \begin{equation}\label{eqn:18a}
 [P, J_+]=[A,B]^{-1}DJ_{+}^{-1}=
 \left( \begin{array}{cc}d& -b  \\ -c & a\end{array}\right)
\left( \begin{array}{cc}\lambda^{-1} & -\lambda^{-1}  \\ 0 & \lambda \end{array}\right) =
 \left( \begin{array}{cc}\lambda^{-1} d & -\lambda^{-1} d-\lambda b   \\ 
 -\lambda^{-1} c & \lambda^{-1} c + \lambda a \end{array}\right).
 \end{equation}
Writing $P=\left( \begin{array}{cc}x& y  \\ z & w\end{array}\right)$,
we have 
 \begin{equation}\label{eqn:18b}
 [P, J_+]=\left( \begin{array}{cc}1-xz & -1+x(x+z)  \\ -z^2 & 1+(x+z)z \end{array}\right).
 \end{equation}
Quotienting by the action of $U$ is equivalent to forgetting $y$ and $w$. Hence
the quotient is determined by $(x,z)\in \CC^2-\{(0,0)\}$, modulo sign.
Equating (\ref{eqn:18a}) and (\ref{eqn:18b}), we get
 \begin{align*}
 a &= \lambda^{-1} (1+xz) \\
 b &= -\lambda^{-1} x^2 \\
 c &= \lambda z^2 \\
 d &= \lambda(1-xz).
 \end{align*}

We have
$$
 t=\Tr ([A,B])=a+d= \lambda (1-xz) + \lambda^{-1} (1+xz) =\lambda+\lambda^{-1} - xz (\lambda-\lambda^{-1}).
$$
So we get the following strata for $\overline{Z}{}_{24}^\lambda$\, :
\begin{itemize}
\item For $xz=0$, $t=\lambda+\lambda^{-1}$. The contribution from the values $(x,z)$ is $2q-2$, so we
get the contribution $(2q-2) e(\overline X_{4,\lambda})$.
\item For $t=2$, we have $xz=c_0$ for some $c_0\neq 0$. Quotienting by the change of sign, $(x,z)$ move
in a $\CC^*$. Also $[A,B]\neq \Id$, so $[A,B]\sim J_+$. This gives the contribution $(q-1) e(\overline X_2)$.
\item For $t=-2$, the computation is similar and the contribution is $(q-1) e(\overline X_3)$.
\item For $t\neq \pm 2, \lambda+\lambda^{-1}$. We have a fibration with fiber $\CC^*$ parametrizing
the values of $(x,z)$, for fixed $xz=(\lambda+\lambda^{-1}-t)/(\lambda - \lambda^{-1})$. 
For each value of $t\in \CC-\{\pm 2, \lambda+\lambda^{-1}\}$, we have
$(A,B)\in \overline X_{4,\lambda}$. The union of all of them is $\overline X_4/\ZZ_2$. The total contribution is
thus $(q-1) \left(e(\overline X_4/\ZZ_2)- e(\overline X_{4,\lambda})\right)$.
\end{itemize}

Putting all together,
 \begin{align*}
 e(\overline{Z}{}_{24}^\lambda) &= (2q-2) e(\overline X_{4,\lambda})+ ( q-1) e(\overline X_2)+ (q-1) e(\overline X_3)+
(q-1) \left(e(\overline X_4/\ZZ_2)- e(\overline X_{4,\lambda})\right) \\
 &= q^5 - 4q^2 + 3q , \\ 
 e({Z}_{24}^\lambda) &= (q^2+q) e(\overline{Z}{}_{24}^\lambda)= q^7 + q^6 - 4q^4 - q^3 + 3q^2. 
 \end{align*}

To get $\cR_{W_2,W_{4,\lambda}}$, we need to quotient by the action of $\CC^*$, corresponding to the diagonal
matrices acting by conjugation on $(A,B,P)$. There are no fixed points, since in such case
$[A,B]=\Id$, which does not happen. 
Hence
 $$
 e(\cR_{W_2,W_{4,\lambda}})= q^4 + q^3 + q^2 - 3q. 
 $$

\subsection{The case $\cC_1=W_3$ and $\cC_2=W_{4,\lambda}$}
Note that $W_3=-W_2$ and $W_{4,\lambda}=-W_{4,-\lambda}$. Hence
the isomorphism $Z(\cC_1,\cC_2) = Z(-\cC_1,-\cC_2)$, described at the beginning
of Subsection \ref{subsec:referee}, gives here
 $$
 Z(W_3,W_{4,\lambda})= Z(W_2,W_{4,-\lambda})=Z{}_{24}^{-\lambda}.
 $$
Therefore using the results of Subsection \ref{subsec:previous}, we have
 \begin{align*}
 e(\overline{Z}{}_{34}^\lambda) &= q^5 - 4q^2 + 3q , \\ 
 e({Z}_{34}^\lambda) &=  q^7 + q^6 - 4q^4 - q^3 + 3q^2, \\ 
 e(\cR_{W_3,W_{4,\lambda}}) &= q^4 + q^3 + q^2 - 3q. 
 \end{align*}

\section{Holonomies of diagonalizable type}\label{sec:diag}

Let $D_1=\left(\begin{array}{cc}\lambda_1 & 0\\ 0 & \lambda_1^{-1}\end{array}\right)$
and $D_2=\left(\begin{array}{cc}\lambda_2 & 0\\ 0 & \lambda_2^{-1}\end{array}\right)$,
with $\lambda_1,\lambda_2\neq 0,\pm 1$.
We want to understand the set
 $$
 Z_{44}^{\lambda_1\lambda_2}= \{(A,B,C_1,C_2) | C_1\in [D_1],C_2\in [D_2], [A,B]C_1=C_2^{-1} \}
 $$
and the quotient $\cR_{\xi_{\lambda_1}, \xi_{\lambda_2}}=Z_{44}^{\lambda_1\lambda_2}//\PGL(2,\CC)$. All
orbits have trivial stabilizers, except in one case: when $A,B,C_1,C_2$ are diagonal with
respect to the same basis, and hence $\lambda_1\lambda_2=1$, that is $\lambda_2=\lambda_1^{-1}$.
As $\lambda$ is defined up to $\lambda\sim \lambda^{-1}$, we also have reducibles in the
case $\lambda_2=\lambda_1$.

Suppose that $\lambda_1\neq \lambda_2,\lambda_2^{-1}$ from now on in this section. Let
 $$
  \overline{Z}{}_{44}^{\lambda_1\lambda_2}= \{(A,B,C) | C \in [D_1],  [A,B]C=D_2^{-1} \}.
 $$
As we said above, the action of $\PGL(2,\CC)$ is free on $ Z_{44}^{\lambda_1\lambda_2}$. 
We check this as follows: write $C_1=QD_1 Q^{-1}$. Then if a non-trivial $P$ fixes $A,B,C_1,C_2$, 
it must be that $P\in Q \CC^* Q^{-1}$, where $\CC^* \subset \PGL(2,\CC)$ denotes the
diagonal matrices. This forces that $A,B,C_2\in  Q \CC^* Q^{-1}$, hence $[A,B]=\Id$ and
$C_2=C_1^{-1}$, which contradicts our assumption. In the second place, it is clear
that $\PGL(2,\CC) \,  \overline{Z}{}_{44}^{\lambda_1\lambda_2}= {Z}{}_{44}^{\lambda_1\lambda_2}$.
Finally,  Condition \eqref{eqn:referee} holds as follows: let $P\in \PGL(2,\CC)$ such that 
  $(A,B,C), (PAP^{-1},PBP^{-1},PCP^{-1}) \in  \overline{Z}{}_{44}^{\lambda_1\lambda_2}$.
Then $P$ fixes $D_2^{-1}$, so $P \in \CC^*$. All together, we
have a fibration 
  $$
 \CC^*  \to \PGL(2,\CC) \x  \overline{Z}{}_{44}^{\lambda_1\lambda_2} \to {Z}{}_{44}^{\lambda_1\lambda_2}.
 $$
So $\cR_{\xi_{\lambda_1},\xi_{\lambda_2}}=\overline{Z}{}_{44}^{\lambda_1\lambda_2}//\CC^*$.

To describe $\overline{Z}{}_{44}^{\lambda_1\lambda_2}$, note that 
as $C\sim D_1$, there exists $P\in \SL(2,\CC)$ with $C=P D_1P^{-1}$. Such $P$ is
defined up to $\CC^*$, the diagonal matrices acting by multiplication on the right on $P$
(whcih is a free action). So we can rewrite
 $$
  \overline{Z}{}_{44}^{\lambda_1\lambda_2}= \{(A,B,P) | [A,B][P,D_1]=D_2^{-1}D_1^{-1} \}/\CC^*.
 $$
Set $D:=D_1=\left(\begin{array}{cc}\lambda& 0\\0&\lambda^{-1}\end{array}\right)$, where
$\lambda=\lambda_1$, and $\xi_{\mu}:=D_2^{-1}D_1^{-1}=\left(\begin{array}{cc}\mu &0\\0&\mu^{-1}\end{array}\right)$,
where $\mu=\lambda_1^{-1}\lambda_2^{-1} \neq 0,1$ and $\lambda^2\mu=\lambda_1^{-1}\lambda_2 \neq 0,1$.

The action of $Q\in \CC^\ast$ on the set of 
$(A,B,P)$ is by conjugation $A\mapsto Q^{-1}AQ$, $B\mapsto Q^{-1}BQ$, $PDP^{-1}\mapsto Q^{-1}PDP^{-1}Q$, or
equivalently, $P\mapsto Q^{-1}P$.

We have $\lambda_1=\lambda$, $\lambda_2=\lambda^{-1}\mu^{-1}$.
The case $\mu=-1$ corresponds to $\lambda_2=-\lambda^{-1}$. This case
is equivalent to $\lambda_2=-\lambda_1$, i.e. to $\lambda^2\mu=-1$. So 
we will assume $\mu\neq -1$ henceforth without loss of generality.

We focus on the following equation
\begin{equation}\label{ppequation}
[A,B][P,D]=\xi_{\mu}.
\end{equation}
Our purpose is to find the solutions of (\ref{ppequation}). 

Consider the following invariants:
 \begin{eqnarray*}
 t_1=\tr([A,B]), \\
 t_2=\tr([P,D]).
 \end{eqnarray*}

Denote by $\eta=[A,B]=\left(\begin{array}{cc} a & b\\ c &d\end{array}\right)$ and $\delta=[P,D]$ so that 
  $$
 \delta=\eta^{-1}\,\xi_{\mu} =\left(\begin{array}{cc}d\mu& -b\mu^{-1} \\-c\mu &a\mu^{-1} \end{array}\right).
  $$
Now $t_1=a+d$ and $t_2=a\mu^{-1}+d\mu$, so
  \begin{equation}\label{eq:eta}
  \left\{ \begin{array}{l} 
  a=\displaystyle\frac{\mu t_1-t_2}{\mu-\mu^{-1}}\\[10pt]
  d=\displaystyle\frac{t_2-\mu^{-1}t_1}{\mu-\mu^{-1}} \\[10pt]
  ad-bc=1\end{array}\right.
  \end{equation}

Consider the following set of matrices $P\in \SL(2,\CC)/\CC^{\ast}$,
$$
\cP=\left\{P \,|\, [P,D]=\delta=\left(\begin{array}{cc}d\mu & -b\mu^{-1} \\-c\mu &a\mu^{-1}\end{array}\right)
, a,b,c,d \text{ satisfy } (\ref{eq:eta}) \right\} \subset \SL(2,\CC)/\CC^{\ast}.
$$

Denote $P=\left(\begin{array}{cc}x&y\\z&w\end{array}\right)$, where $xw-yz=1$.
Also
\begin{equation*}
[P,D]=\left(\begin{array}{cc}x&y\\z&w\end{array}\right)\left(\begin{array}{cc}\lambda &0\\0&\lambda^{-1}\end{array}\right)\left(\begin{array}{cc}w&-y\\-z&x\end{array}\right)\left(\begin{array}{cc}\lambda^{-1}&0\\0&\lambda\end{array}\right)=\left(\begin{array}{cc} xw-\lambda^{-2}yz & xy(1-\lambda^{2})\\zw (1-\lambda^{-2}) & xw-\lambda^{2}yz\end{array}\right).
\end{equation*}

Hence, equality $[P,D]=\delta$ is now
\begin{equation}\label{eq:PD}
\left(\begin{array}{cc} xw-\lambda^{-2}yz & xy(1-\lambda^{2})\\zw (1-\lambda^{-2}) & xw-\lambda^{2}yz\end{array}\right)=\left(\begin{array}{cc}d\mu & -b\mu^{-1} \\-c\mu &a\mu^{-1}\end{array}\right).
\end{equation}

It gives us the following equations:
\begin{eqnarray}
xw-\lambda^{-2}yz&=&d\mu \label{eq:PD1}\\
xy(1-\lambda^{2})&=&-b\mu^{-1} \label{eq:PD2}\\
zw(1-\lambda^{-2})&=&-c\mu\label{eq:PD3}\\
xw-\lambda^{2}yz&=&a\mu^{-1}\label{eq:PD4} \\
xw-yz &= &1.\label{eq:detP}
\end{eqnarray}

\begin{lem} \label{lem:rel-t1-t2}
If (\ref{eq:PD1})--(\ref{eq:detP}) have solutions then 
 \begin{equation} \label{eq:traces}
 \mu(\lambda^2-1) t_1 + (1-\mu^2\lambda^2) t_2= (1-\mu^2)(1+\lambda^2).
   \end{equation}
This is actually equivalent to $(a\mu^{-1}-1) + \lambda^2(d\mu-1)=0$.
\end{lem}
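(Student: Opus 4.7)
The plan is to exploit the fact that equations (\ref{eq:PD1}), (\ref{eq:PD4}) and (\ref{eq:detP}) involve the entries of $P$ only through the two combinations $xw$ and $yz$; so three linear equations in two unknowns must satisfy one compatibility condition, and this is exactly the identity we want.

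Concretely, I would first subtract (\ref{eq:detP}) from (\ref{eq:PD1}) and from (\ref{eq:PD4}) to obtain
$$(1-\lambda^{-2})\,yz = 1-d\mu, \qquad (1-\lambda^{2})\,yz = 1-a\mu^{-1}.$$
Since $\lambda\neq \pm 1$, both coefficients are non-zero, so $yz$ can be eliminated between these two relations; using $1-\lambda^{-2}=-\lambda^{-2}(\lambda^{2}-1)$, the result is precisely
$$(a\mu^{-1}-1) + \lambda^{2}(d\mu-1)=0,$$
which is the equivalent form stated in the lemma.

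To pass to the formulation in terms of the traces $t_1,t_2$, I would substitute the expressions for $a$ and $d$ coming from (\ref{eq:eta}):
$$a\mu^{-1}=\frac{t_1-\mu^{-1}t_2}{\mu-\mu^{-1}},\qquad \lambda^{2}d\mu=\frac{\lambda^{2}(\mu t_2-t_1)}{\mu-\mu^{-1}},$$
and rearrange $a\mu^{-1}+\lambda^{2}d\mu=1+\lambda^{2}$. Clearing the denominator $\mu-\mu^{-1}$ (multiplying by $\mu$) turns the left-hand side into $\mu(\lambda^{2}-1)t_1 + (1-\mu^{2}\lambda^{2})t_2$ and the right-hand side into $(1+\lambda^{2})(1-\mu^{2})$, as required by (\ref{eq:traces}).

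No real obstacle is expected: the whole argument is a short linear-algebra manipulation of three scalar equations. The only points requiring mild attention are the hypotheses $\lambda^{2}\neq 1$ and $\mu\neq \pm 1$ (already standing assumptions in this section, since we are in the case $\lambda_{1}\neq \lambda_{2}^{\pm 1}$ and $\mu\neq -1$), which guarantee that the divisions by $1-\lambda^{\pm 2}$ and by $\mu-\mu^{-1}$ are legitimate.
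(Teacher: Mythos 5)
Your argument is precisely the paper's proof: you extract two expressions for $yz$ by combining (\ref{eq:PD1}) and (\ref{eq:PD4}) with (\ref{eq:detP}), equate them to obtain $(a\mu^{-1}-1)+\lambda^{2}(d\mu-1)=0$, and then translate this into the trace relation via (\ref{eq:eta}). The only blemish is a self-cancelling sign slip in your two displayed intermediate equations (subtracting (\ref{eq:detP}) from (\ref{eq:PD1}) gives $(1-\lambda^{-2})yz = d\mu-1$, not $1-d\mu$, and similarly for the other), which flips both relations consistently and therefore does not affect the eliminated identity or the final result.
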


\begin{proof}
From (\ref{eq:PD1}) and (\ref{eq:detP}), we have
 $yz= \frac{d\mu-1}{1-\lambda^{-2}}$. 
From (\ref{eq:PD4}) and (\ref{eq:detP}), we have
 $yz= \frac{a\mu^{-1}-1}{1-\lambda^2}$.
Equating both, we get 
  \begin{equation*}
   \frac{d\mu-1}{1-\lambda^{-2}} = \frac{a\mu^{-1}-1}{1-\lambda^2}
     \end{equation*}
that is rewritten as 
  \begin{equation} \label{eq:yz}
  (a\mu^{-1}-1) + \lambda^2(d\mu-1)=0.
     \end{equation}
Now using (\ref{eq:eta}), we get the result.
\end{proof} 

Lemma \ref{lem:rel-t1-t2} says that we can write $t_1$ in terms of $t_2$ as
\begin{equation*}\label{eq:t1t2}
 t_1=\frac{\mu^2\lambda^{2}-1}{\mu(\lambda^{2}-1)}t_2+ \frac{(1-\mu^{2})(1+\lambda^{2})}{\mu(\lambda^{2}-1)}.
\end{equation*}
Therefore we have a projection 
  \begin{eqnarray}\label{eqn:18pi}
  \pi: \cP  & \too & \CC  \notag \\
    P & \mapsto & t_2=\Tr ([P,D]).
   \end{eqnarray}

\begin{lem}\label{lem:bc=0}
Assuming (\ref{eq:traces}) holds, the condition $bc=0$  is equivalent to one of the following:
\begin{itemize} 
\item $a= d^{-1}=\mu$, $\delta=\left( \begin{array}{cc}1 & 0\\ -c\mu & 1\end{array}\right)$ or 
$\left(\begin{array}{cc} 1& -b\mu^{-1}\\0&1\end{array}\right)$. Here $t_1=\mu+\mu^{-1}$ and $t_2=2$,
\item  $a=d^{-1}= \mu\lambda^{2}$, $\delta=\left(\begin{array}{cc}\lambda^{2} & 0\\ -c\mu& \lambda^{-2}\end{array}\right)$
or $\left(\begin{array}{cc} \lambda^{2}& -b\mu^{-1} \\0&\lambda^{-2}\end{array}\right)$. Here
$t_1=\mu\lambda^2+\mu^{-1} \lambda^{-2}$ and $t_{2}=\lambda^{2}+\lambda^{-2}$.
\end{itemize}
\end{lem}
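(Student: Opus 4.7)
The plan is to combine the trace relation $(a\mu^{-1}-1)+\lambda^{2}(d\mu-1)=0$ obtained in Lemma~\ref{lem:rel-t1-t2} with the determinant constraint $ad-bc=1$ coming from $\eta\in \SL(2,\CC)$. The hypothesis $bc=0$ reduces the determinant relation to $ad=1$, so I substitute $d=a^{-1}$ into the trace relation.

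Multiplying through by $a\mu$, the trace relation becomes the quadratic $a^{2}-\mu(1+\lambda^{2})\,a+\lambda^{2}\mu^{2}=0$. Its discriminant is $\mu^{2}\bigl((1+\lambda^{2})^{2}-4\lambda^{2}\bigr)=\mu^{2}(1-\lambda^{2})^{2}$, a perfect square, so the roots factor as $a=\mu$ and $a=\mu\lambda^{2}$, with corresponding $d=a^{-1}\in\{\mu^{-1},\,\mu^{-1}\lambda^{-2}\}$. These are exactly the two cases of the lemma, after using the freedom $\lambda\leftrightarrow\lambda^{-1}$ allowed by $W_{4,\lambda}=W_{4,\lambda^{-1}}$ to normalise the diagonal entries of $\delta$ in the second case.

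For each root I then read off the shape of $\delta=\eta^{-1}\xi_{\mu}$, whose diagonal entries $d\mu,\ a\mu^{-1}$ reduce to $(1,1)$ in the first case and to $(\lambda^{2},\lambda^{-2})$ in the second. Since $bc=0$ splits as $b=0$ or $c=0$, one of the two off-diagonal entries $-b\mu^{-1},\ -c\mu$ of $\delta$ vanishes, producing the two sub-forms of $\delta$ listed in each bullet. The traces $t_{1}=a+d$ and $t_{2}=a\mu^{-1}+d\mu$ then follow by direct substitution. The converse direction (each displayed form of $\delta$ forces $bc=0$) is immediate by inspection of the off-diagonal entries.

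No serious obstacle arises; the only essential algebraic observation is the factorisation $(1+\lambda^{2})^{2}-4\lambda^{2}=(1-\lambda^{2})^{2}$, which splits the quadratic cleanly over $\CC(\mu,\lambda)$ and yields exactly the two solutions claimed. The hypothesis $\mu\neq-1$ and $\mu\lambda^{2}\neq-1$ made earlier ensures that the two solutions are distinct and that the resulting $\delta$ is not of Jordan type with $-1$ on the diagonal.
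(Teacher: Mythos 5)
Your argument is correct and is essentially the paper's own: both combine the trace relation $(a\mu^{-1}-1)+\lambda^{2}(d\mu-1)=0$ with $ad-bc=1$, the only cosmetic difference being that the paper rewrites the relation as the factorisation $(\mu d-1)(\mu^{-1}a-\lambda^{2})=ad-1=bc$ and reads off the two roots, whereas you substitute $d=a^{-1}$ and solve the equivalent quadratic $(a-\mu)(a-\mu\lambda^{2})=0$. Your observation about swapping $\lambda\leftrightarrow\lambda^{-1}$ to match the displayed diagonal of $\delta$ in the second case is harmless, since only the conjugacy class and trace of $\delta$ matter downstream.
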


\begin{proof}
The equality $bc=0$ means that the matrix $\delta$ in (\ref{eq:PD}) is equal to 
$\left(\begin{array}{cc} d\mu & 0\\-c\mu & a\mu^{-1}\end{array}\right)$ or 
$\left(\begin{array}{cc} d\mu & -b\mu^{-1}\\0& a\mu^{-1}\end{array}\right)$, depending
on whether $b=0$ or $c=0$ (or both). Note that in this case $ad=1$, so $a=d^{-1}$.
By Lemma \ref{lem:rel-t1-t2} (see equation (\ref{eq:yz})), we have
\begin{eqnarray}
 &&a\mu^{-1}-1=(1-d\mu)\lambda^{2} \notag \\
  && \Longleftrightarrow \mu^{-1} a+\lambda^{2}d\mu-\lambda^2=1 \notag \\
 && \label{eqn:bc}
  \Longleftrightarrow (\mu d-1)(\mu^{-1}a-\lambda^{2})=ad-1=bc=0.
\end{eqnarray}
Hence it must be $d=\mu^{-1}$ or $a=\mu\lambda^2$. In the first case,
$a=\mu$, $t_1=\mu+\mu^{-1}$ and $t_2=a\mu^{-1}+d\mu=2$. In the second case,
$a=\mu \lambda^2$, $t_1=\mu\lambda^2+\mu^{-1} \lambda^{-2}$ and $t_{2}=\lambda^{2}+\lambda^{-2}$.
\end{proof}

\begin{rmk} 
Note that $\lambda^2+\lambda^{-2}\neq 2$, as $\lambda\neq \pm 1$.
\end{rmk}

\begin{thm}\label{thm:P}
Consider $\cP_{t_2}=\pi^{-1}(t_2)$, where $\pi$ is given in (\ref{eqn:18pi}), and $(t_1,t_2)$ satisfy
(\ref{eq:traces}). Then
\begin{itemize}
\item[(1)] If $t_2 \in \CC-\{2,\, \lambda^{2}+\lambda^{-2}\}$ then
$$
\cP_{t_2}\cong\left\{(x,y,z,w,b,c)\,|\, x\neq 0,\, y=\frac{-\mu^{-1} b}{x(1-\lambda^2)}, \, 
z=\frac{ (\mu-a)x}{ b},\,w=\frac{\mu^{-1}a-\lambda^{2} }{( 1- \lambda^{2})x}
, bc=k \right\}/\CC^\ast
$$
for some $k=k(t_2)\neq 0$.
\item[(2)] If $t_2=2$ then 
$$
\cP_{t_2}\cong\left\{(x,y,z,w,b,c)\,|\, x\neq0,\, y=\frac{-\mu\,b}{x(1-\lambda^{2})},\, 
z= \frac{-\mu^{-1} c}{(1-\lambda^{-2})w} \, , w=\frac1x\, ,bc=0 \right\}/\CC^\ast
$$
\item[(3)] If $t_2=\lambda^2+\lambda^{-2}$ then 
$$
\cP_{t_2}\cong\left\{(x,y,z,w,b,c)\,|\, x= \frac{-\mu\,b}{y(1-\lambda^{2})}  ,\, y\neq 0,\, 
z= -\frac1y,\, w= \frac{-\mu^{-1} c}{(1-\lambda^{-2})z}\, ,bc=0 \right\}/\CC^\ast
$$
\end{itemize}
where $\CC^\ast$ acts as $(x,y,z,w)\mapsto (\alpha x, \alpha^{-1}y, \alpha z, \alpha^{-1}w)$.
So $\cP_{t_2}\cong \CC^\ast$ in (1), and $\cP_{t_2} \cong \{bc=0\}$ in (2) and (3). 
\end{thm}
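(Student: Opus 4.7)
The plan is to use Lemma~\ref{lem:rel-t1-t2} and Lemma~\ref{lem:bc=0} to reduce the system (\ref{eq:PD1})--(\ref{eq:detP}) to an explicit form in each of the three cases. Once $t_2$ is fixed, (\ref{eq:traces}) determines $t_1$, hence by (\ref{eq:eta}) also $a$ and $d$, and so the product $k(t_2):=ad-1=bc$ is a specific function of $t_2$. By Lemma~\ref{lem:bc=0}, $k(t_2)=0$ precisely when $t_2\in\{2,\lambda^{2}+\lambda^{-2}\}$, so the trichotomy of the theorem is dictated by whether $k\neq 0$ or $t_2$ takes one of the two exceptional values. The right $\CC^{*}$-action $P\mapsto PQ$ with $Q=\diag(\alpha,\alpha^{-1})$ acts as $(x,y,z,w)\mapsto(\alpha x,\alpha^{-1}y,\alpha z,\alpha^{-1}w)$ and leaves $a,b,c,d$ untouched.

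For case (1), $b,c\neq 0$, so (\ref{eq:PD2}) forces $x\neq 0$ and pins down $y=-\mu^{-1}b/(x(1-\lambda^{2}))$. Subtracting (\ref{eq:detP}) from (\ref{eq:PD1}) yields $yz(1-\lambda^{-2})=d\mu-1$, and combining this with the identity (\ref{eq:yz}) (equivalently (\ref{eqn:bc})) produces $z=(\mu-a)x/b$. Then $w$ is read off from (\ref{eq:detP}) as $w=(\mu^{-1}a-\lambda^{2})/((1-\lambda^{2})x)$, and (\ref{eq:PD3}), (\ref{eq:PD4}) hold automatically, again by (\ref{eq:yz}). The surviving parameters are $(x,b)\in\CC^{*}\times\CC^{*}$, with $c=k/b$ determined; since the $\CC^{*}$-action moves only $x$, the quotient is $\cP_{t_2}\cong\CC^{*}$.

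For cases (2) and (3), the same subtraction gives $yz(1-\lambda^{-2})=d\mu-1$, which yields $yz=0$, $xw=1$ in case (2) and $yz=-1$, $xw=0$ in case (3). In case (2), $x=0$ would force $b=0$ via (\ref{eq:PD2}) and then (\ref{eq:PD1}) would read $\lambda^{-2}=1$, a contradiction, so $x\neq 0$ and $w=1/x$; the off-diagonal equations (\ref{eq:PD2}), (\ref{eq:PD3}) then express $y$ and $z$ as linear functions of $b$ and $c$ respectively. In case (3) the symmetric argument forces $y\neq 0$, $z=-1/y$, and $x$ and $w$ are recovered linearly from $b$ and $c$. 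The condition $bc=0$ is automatic in both situations, the remaining continuous parameter ($x$ in case 2, $y$ in case 3) is absorbed by the $\CC^{*}$-action, and the fibre collapses to $\{bc=0\}\subset\CC^{2}$.

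The main obstacle is confirming that solving only two or three of the five equations (\ref{eq:PD1})--(\ref{eq:detP}) suffices to deliver the remaining ones; the essential compatibility each time is the identity (\ref{eq:yz}) of Lemma~\ref{lem:rel-t1-t2}, so the algebra only closes once this relation is invoked. A secondary point is that the denominators $b$ and $y$ that appear in the generic formulas of cases (1) and (3) vanish precisely at the boundary between the cases, which confirms that the trichotomy is intrinsic to $\cP$ and not an artefact of the parameterisation chosen.
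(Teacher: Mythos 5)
Your proposal is correct and follows essentially the same route as the paper: use Lemma \ref{lem:bc=0} to establish the trichotomy in $t_2$, solve (\ref{eq:PD2}), (\ref{eq:detP}) and one diagonal equation explicitly for $y,z,w$ (resp.\ $x,z,w$), and observe that the remaining equations close up precisely because of the identity (\ref{eq:yz}) from Lemma \ref{lem:rel-t1-t2}, before quotienting out the residual $\CC^{\ast}$ acting on $x$ (resp.\ $y$). The only cosmetic difference is that you extract $yz$ from (\ref{eq:PD1})$-$(\ref{eq:detP}) where the paper uses (\ref{eq:PD4})$-$(\ref{eq:detP}); these are interchangeable given (\ref{eq:yz}), and your formulas reproduce those in the statement.
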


\begin{proof}
To prove $(1)$, note that Lemma \ref{lem:bc=0} implies that $bc\neq 0$.
Now $a,d$ are fixed by (\ref{eq:eta}) and $bc=ad-1=k\neq 0$.

We show that $\cP_{t_2}\subset 
\left\{(x,y,z,w)\in \CC^4\,|\, x\neq 0,\, y=\frac{-\mu b}{x(1-\lambda^2)}, \, 
z=\frac{ (1-\mu^{-1}a)x}{\mu^2 b},\,w=\frac{a-\mu\lambda^{2}}{( \lambda^{2}-1)x}\right\}$. 
First (\ref{eq:PD2}) implies $xy=\frac{-\mu b}{1-\lambda^{2}}$, and as $x\neq 0$ (since $b\neq 0$) then we get 
$y=\frac{-\mu b}{x(1-\lambda^{2})}$. Now equations (\ref{eq:PD4}) and (\ref{eq:detP}) imply that
$yz=\frac{\mu^{-1}\,a-1}{1-\lambda^{2}}$. We can divide by $y$ since $y\neq 0$, 
hence $z=\frac{\mu^{-1}\,a-1}{y(1-\lambda^{2})}=\frac{ (1-\mu^{-1}a)x}{\mu b}$.
Finally equation (\ref{eq:PD3}) implies $zw=\frac{-\mu^{-1} c}{1-\lambda^{-2}}$ hence, dividing by $z$ (since $c\neq0$), 
$w=\frac{-\mu^{-1} c}{z(1-\lambda^{-2})} =
\frac{- bc}{( 1-\lambda^{-2})(1-\mu^{-1}a)x}$.
Using that $bc=(\mu d-1)(\mu^{-1}a-\lambda^{2})$ (equation (\ref{eqn:bc})) 
and $d\mu-1=- \lambda^{-2}(a\mu^{-1}-1)$ (equation (\ref{eq:yz})),
we get $w= \frac{\mu^{-1} a-\lambda^{2}}{( 1- \lambda^{2})x}$.

For the reverse inclusion, take $x\neq 0,\, y=\frac{-\mu b}{x(1-\lambda^2)}, \, 
z=\frac{ (1-\mu^{-1}a)x}{\mu b},\,w=\frac{\mu^{-1}a-\lambda^{2}}{( 1-\lambda^{2})x}$. Then clearly (\ref{eq:PD2}) holds. Now
\begin{eqnarray*} 
 xw &=& \frac{\mu^{-1}a-\lambda^2}{1-\lambda^2}, \\
 yz &=& \frac{\mu^{-1}a-1}{1-\lambda^2}.
\end{eqnarray*}
So (\ref{eq:PD1}), (\ref{eq:PD4}) and (\ref{eq:detP}) are
\begin{eqnarray*} 
 xw -yz &=&  1, \\
 xw -\lambda^2 yz &=& \mu^{-1}a, \\
 xw -\lambda^{-2} yz &=& 1- \lambda^{-2}(a\mu^{-1}-1)=d\mu .
\end{eqnarray*}
Finally, 
 $$
 zw=
 \frac{ (1-\mu^{-1}a) (\mu^{-1}a -\lambda^{2})  }{\mu b (1- \lambda^{2})}=
 \frac{ \lambda^2(d \mu-1) (\mu^{-1}a -\lambda^{2})  }{\mu b (1- \lambda^{2})}=
  \frac{\lambda^2 bc }{\mu b(1-\lambda^2)}= -\frac{c\mu^{-1}}{1-\lambda^{-2}},
 $$
using (\ref{eqn:bc}) and (\ref{eq:yz}) again.

To prove (2), note that by Lemma \ref{lem:bc=0}, $a=\mu$. Then $yz=\frac{1-a\mu^{-1}}{1-\lambda^2}=0$. So
$xw=1$. The formulas for $y$ and $z$ follow straight away. The reverse inclusion is equally easy.

To prove (3), note that Lemma \ref{lem:bc=0} says that $a=\mu\lambda^2$. From
(\ref{eq:PD4}) and (\ref{eq:detP}), $xw=\frac{a\mu^{-1} - \lambda^2}{1-\lambda^2}=0$. 
So $yz=-1$. The formulas for $x$ and $w$ follow clearly,
and the reverse inclusion is easy.
\end{proof}

Now we compute the Hodge-Deligne polynomials.

\subsection{Case $\lambda^{2}\mu\neq \pm1$.} \label{case:1}
There are several contributions depending on the values of the parameter $t_2$.

\begin{itemize}
\item When $t_{2}=2$ then $t_{1}=\mu+\mu^{-1}$. Hence  $[P,D]=\left(\begin{array}{cc}\mu^{-1}d &\ast \\ \ast &\mu a 
  \end{array}\right)$ and 
$[A,B]=\left(\begin{array}{cc}\mu& b\\c& \mu^{-1} \end{array}\right)
\sim \left(\begin{array}{cc}\mu& 0\\0& \mu^{-1} \end{array}\right)$, as $bc=0$.
The contribution is $\overline{X}_{4,\mu}$ from $[A,B]$,
and from $[P,D]$ we get $\{bc=0\}$ by Theorem \ref{thm:P}(2). 

The contribution of this fiber is thus
\begin{align*}
e(F_1) &=(2q-1)e(\overline{X}_{4,\mu})\\ 
&=(2 q-1)(q^3+ 3 q^2 -3q-1 )\\
&= 2q^4 +5q^3-9 q^2+q+1.
\end{align*}

\item When $t_2=\lambda^{2}+\lambda^{-2}$, Lemma \ref{lem:rel-t1-t2} says that
$t_{1}=\mu \lambda^{2}+ \mu^{-1}\lambda^{-2} \neq \pm 2$ (here we use $\lambda^{2}\mu\neq \pm 1$). 
The contribution is then, by Theorem \ref{thm:P}(3),
\begin{align*}
e(F_2) &=(2q-1)e(\overline{X}_{4,\mu\lambda^{2}}) \\
&= 2q^4 +5q^3-9 q^2+q+1.
\end{align*}

\item When $t_1=2$ then $bc\neq 0$ hence the matrix $\eta\neq \Id$. So $\eta \sim J_{+}$. 
It cannot be $t_2=2$ or $t_2=\lambda^{2}+\lambda^{-2}$, as in these cases $t_1\neq 2$.
So Theorem \ref{thm:P}(1) says that the contribution of $\cP$ is $\CC^\ast$. Hence  
\begin{align*}
 e(F_3) &=(q-1)e(\overline{X}_{2}) \\ 
 & =(q-1)(q^3-2q^2-3 q ) \\
 &=q^4-3q^3-q^2+3 q .
\end{align*}

\item The case $t_1=-2$ is analogous, the only difference being that the matrix $\eta$ is of Jordan type $J_{-}$. 
The contribution is then
\begin{align*}
 e(F_4) &=(q-1) e(\overline{X}_{3}) \\ 
 &=(q-1)(q^3+3 q^2)\\
 &= q^4 +2 q^3- 3q^2.
\end{align*}

\item The generic case is a fibration with base $t_1\in L=\CC-\{\pm 2,\mu+\mu^{-1},
\lambda^{2}\mu+\lambda^{-2}\mu^{-1}\}$. The fibration $\cP\to L$ is trivial with fibers
$\CC^\ast$. The fibration $\{(A,B)\} \to L$ has total space $\overline{X}_4/\ZZ_2$, and
we remove two fibers. Therefore
\begin{align*}
 e(F_5)&=(q-1)
\left(e(\overline{X}_4/\ZZ_{2})-e(\overline{X}_{4,\mu})-e(\overline{X}_{4,\lambda^2\mu})\right)=\\
 &=(q-1) (q^4-2q^3-3q^2+3q+1) - 2 (q^3 + 3 q^2 - 3 q-1))\\
   &=q^5 - 5 q^4 - 5 q^3 + 18 q^2 - 6 q -3.
\end{align*}

\end{itemize}

The total sum of all contributions is
$$
e( \overline{Z}{}_{44}^{\lambda_1\lambda_2})=e(F_1)+e(F_2)+e(F_3)+e(F_4)+e(F_5)=  q^5+ q^4+ 4q^3-4q^2-q-1.
$$
We quotient by $\CC^\ast$ to get the sought moduli space. So
$$
 e(\cR_{\xi_{\lambda_1},\xi_{\lambda_2}})= e( \overline{Z}{}_{44}^{\lambda_1\lambda_2}/\CC^\ast)= q^4+2q^3+6q^2+2q +1.
$$

\subsection{Case $\lambda^{2}\mu = - 1$} \label{case:2}

\begin{itemize}
\item The set $F_1$ is defined by $t_2=2$ and $t_1=\mu+\mu^{-1}$. As in the previous case,
we have $e(F_1)= 2q^4 +5q^3-9 q^2+q+1$.
\item The set $F_2$ is defined by $t_2=\lambda^{2}+\lambda^{-2}$ and $t_1=-2$. We 
have $bc=0$ now, so $\cP$ is computed in Theorem \ref{thm:P}(3) as $\{bc=0\}$. There are
three cases:
\begin{itemize}
\item $b=0,\,c\neq 0$. This gives $(q-1)e(\overline{X}_3)$, since $[A,B]\sim J_{-}$,
\item $b\neq 0,\, c=0$. This gives $(q-1)e(\overline{X}_3)$, since $[A,B]\sim J_{-}$,
\item $b=c=0$. This gives $e(X_1)$, since $[A,B]= -\Id$.
\end{itemize}
So the sum is 
\begin{align*}
 e(F_2) &=2(q-1) e(\overline{X}_3)+ e(X_1) \\
 & =(q^3-q) + 2 (q-1 ) (q^3+3 q^2 ) \\
 &=2q^4 + 5q^3-6q^2 -q .
 \end{align*}
\item The set $F_3$ is defined by $t_1=2$. We note that $bc\neq 0$, so $\eta\sim J_+$.
The contribution is 
\begin{align*}
 e(F_3) &=(q-1) e(\overline{X}_{2})\\ 
 &=q^4-3q^3-q^2+3 q .
\end{align*}
\item Finally,
the generic case is a fibration with base $t_1\in L=\CC-\{\pm 2,\mu+\mu^{-1}\}$. 
The fibration $\cP\to L$ is trivial with fibers
$\CC^\ast$. The fibration $\{(A,B)\}\to L$ has total space $\overline{X}_4/\ZZ_2$, and
we remove one fiber. Therefore
\begin{align*}
 e(F_4)&=(q-1) \left(e(\overline{X}_4/\ZZ_{2})-e(\overline{X}_{4,\mu})\right) \\
 &=q^5 -4q^4-3q^3+12 q^2-4q-2.
\end{align*}
\end{itemize}

The total sum is
 $$
 e( \overline{Z}{}_{44}^{\lambda_1\lambda_2})=q^5+ q^4 +4 q^3- 4 q^2-q-1  
  $$
and 
$$
 e(\cR_{\xi_{\lambda_1},\xi_{\lambda_2}})= e( \overline{Z}{}_{44}^{\lambda_1\lambda_2}/\CC^\ast)= q^4+2q^3+6q^2+2q +1.
$$

\section{The case $\lambda_1=\lambda_2$}

We end up with the moduli space $\cR_{\xi_{\lambda_1},\xi_{\lambda_1}}$,
in which case the moduli space has reducibles. For convenience, we are going to take
$\lambda_2=\lambda_1^{-1}$, since $\xi_{\lambda_2} \sim \xi_{\lambda_1}$.
Then $\mu=\lambda_1^{-1}\lambda_2^{-1}=1$. 
Now we have 
 \begin{align*}
 Z_{44}^{\lambda_1\lambda_2} &= \{(A,B,C_1,C_2) | C_1,C_2\in W_{4,\lambda}, [A,B]C_1C_2=\Id\}, \\
 \overline{Z}{}_{44}^{\lambda_1\lambda_2} &= \{(A,B,P) | [A,B][P,D]=\Id\}/\CC^*
 \end{align*}
where $D=\left(\begin{array}{cc} \lambda & 0 \\ 0 & \lambda^{-1} \end{array}\right)$. Here
$\lambda=\lambda_1$ 
and $\CC^*$ acts on $P$ by multiplication on the right.
Also $t_1=\Tr([A,B])$, $t_2=\Tr([P,D])$ satisfy $t_2=t_1$.

Denote by $[A,B]=\left(\begin{array}{cc} a & b\\ c &d\end{array}\right)$ and 
$$
\cP=\left\{P \,|\, [P,D]=\left(\begin{array}{cc}d & -b \\-c &a\end{array}\right)\right\} \subset \SL(2,\CC)/\CC^*.
$$
The computations in Section \ref{sec:diag} remain valid with $\mu=1$ except
that we cannot use equation (\ref{eq:eta}). This is only used to 
reduce equation (\ref{eq:yz}). But note that this equation can be
reduced directly by using $t_1=a+d$, $t_2=a\mu^{-1}+d\mu$.
In particular, Theorem \ref{thm:P} also holds for $\mu=1$.
The computation of the Hodge-Deligne polynomials of the strata now runs as follows.

\begin{itemize}
\item When $t_2=2$, we have the subset $F_1$ is as in the first case of Subsection \ref{case:1}. 
So $e(F_1)= 2q^4 +5q^3-9 q^2+q+1$.
\item When $t_2=\lambda^{2}+\lambda^{-2}=t_1$, we have $bc=0$. 
There are three cases:
 \begin{itemize}
  \item $b=0$, $c\neq 0$. This gives $(q-1) e(\overline X_2)$, since $[A,B]\sim J_+$,
  \item $b \neq 0$, $c= 0$. This gives $(q-1) e(\overline X_2)$, since $[A,B]\sim J_+$,
  \item $b=c=0$. This gives $e(X_0)$, since $[A,B]=\Id$.
  \end{itemize}
So the sum is 
$$
e(F_2)=2(q-1) e(\overline X_2)+ e(X_0)=3q^4-2q^3-3q^2+2q.
$$
\item The stratum defined by $t_1=-2$ contributes
\begin{align*}
 e(F_3) &=(q-1) e(\overline{X}_{3}) \\ 
 &= q^4 +2 q^3- 3q^2.
\end{align*}
\item The remaining contribution 
is a fibration with base $t_1\in L=\CC-\{\pm 2,\mu+\mu^{-1}\}$. So,
as in the fourth case of Subsection \ref{case:2}, we have
\begin{align*}
 e(F_4)&=(q-1) \left(e(\overline{X}_4/\ZZ_{2})-e(\overline{X}_{4,\mu})\right) \\
 &=q^5 -4q^4-3q^3+12 q^2-4q-2.
\end{align*}
\end{itemize}
The total sum is
  $$ 
  e( \overline{Z}{}_{44}^{\lambda_1\lambda_2})=q^5+ 2q^4 +2q^3 -3 q^2 -q -1 \,.
  $$

To finish, we compute the Hodge-Deligne polynomial of 
 $$
 \cR_{\xi_\lambda,\xi_\lambda}= Z_{44}^{\lambda_1\lambda_2}//\PGL(2,\CC) =\overline{Z}{}_{44}^{\lambda_1\lambda_2}//\CC^*.
 $$
The reducibles  $(A,B,C_1,C_2)\in Z_{44}^{\lambda_1\lambda_2}$ satisfy
that $[A,B]=\Id$, $C_1=C_2^{-1}\in W_{4,\lambda}$, and the stabilizer of $C_1$ also stabilize $A,B$.
Correspondingly, the reducibles  $(A,B,P)\in  \overline{Z}{}_{44}^{\lambda_1\lambda_2}$ satisfy
that $A,B,P$ are diagonal matrices and  $C=[P,D]=[A,B]^{-1}=\Id$. 
We also have to see which orbits have reducibles in their closure under the action
of the group $\CC^*$ of diagonal matrices. For this, $A,B$ should be
either upper triangular or lower triangular.
The matrices of the form $A=\left(\begin{array}{cc} x & a \\ 0 & x^{-1}\end{array}\right)$,
$B=\left(\begin{array}{cc} y & b \\ 0 & y^{-1}\end{array}\right)$, $C=[A,B]^{-1}$,
converge to 
$A'=\left(\begin{array}{cc} x & 0 \\ 0 & x^{-1}\end{array}\right)$,
$B'=\left(\begin{array}{cc} y & 0 \\ 0 & y^{-1}\end{array}\right)$, $C'=\Id$. 
Hence they go to a single orbit in the GIT quotient. Let $\overline\cD$ be the space
of $A,B$ both either upper-triangular or lower-triangular. Then $e(\overline\cD)=(q-1)^2
(2(q-1)^2+4(q-1)+1)=(q-1)^2(2q^2-1)$. So the irreducibles
$\overline{Z}{}_{44}^{\lambda_1\lambda_2*}=\overline{Z}{}_{44}^{\lambda_1\lambda_2}-\overline\cD$ have 
 $$
 e(\overline{Z}{}_{44}^{\lambda_1\lambda_2*})=q^5+6q^3-4q^2-3q.
 $$
In the quotient, the contribution of the reducibles is
$(q-1)^2$. Hence
  \begin{align*} 
 e(\cR_{\xi_\lambda,\xi_\lambda})&=e(\overline{Z}{}_{44}^{\lambda_1\lambda_2*}/\CC^*) + (q-1)^2 \\
 &= q^4+q^3+8q^2+q+1.
  \end{align*}

\section{Betti numbers and Hodge numbers} \label{sec:hodge}

With the simultaneous information of the Hodge-Deligne polynomial and the Poincar\'e polynomial
we are able to recover some of the Hodge numbers of the spaces. 
Consider the variety 
$Z=\cR_{\xi_\lambda,\xi_\mu}(X, \SL(2,\CC))$, $\lambda\neq \mu^{\pm 1}$, 
whose Hodge-Deligne polynomial was computed in Section \ref{sec:diag},
$$
e(\cR_{\xi_\lambda,\xi_\mu}(X, \SL(2,\CC))) =q^4+2q^3+6q^2+2q+1.
$$
Recall that the Hodge-Deligne polynomial is
$e(Z)=\sum_{p}\sum_{k} (-1)^{k}h_{c}^{k,p,p}(Z) q^p = \sum_{p} e_p \, q^p$.

For $Z=\cR_{\xi_\lambda,\xi_\mu}(X, \SL(2,\CC))$, and $|\lambda|=|\mu|=1$, the Poincar\'e polynomial was computed in \cite{BY},
$$
P_t(\cR_{\xi_\lambda,\xi_\mu}(X, \SL(2,\CC)))=10 t^4+2 t^3+3 t^2+1.
$$
Now note that the family $\cR_{\xi_\lambda,\xi_\mu}(X, \SL(2,\CC))$ over $\{(\lambda,\mu)\in (\CC^*-\{\pm 1\})^2|\lambda\neq \mu,\mu^{-1}\}$ is
analytically locally trivial. In particular, the spaces are diffeomorphic, and hence all have the same Poincar\'e polynomial,
for any $\lambda  \neq \mu,\mu^{-1}$, $\lambda,\mu\neq 0,\pm 1$.

This character variety is smooth and of complex dimension $4$ so its compactly supported Poincar\'e polynomial is 
$$
P_{t}^{c}(\cR_{\xi_\lambda,\xi_\mu}(X, \SL(2,\CC)))=t^{8} + 3 t^{6} + 2 t^{5} + 10 t^{4}.
$$

We write  $h^{k,p,p}_{c}=h^{k,p,p}_{c}(\cR_{\xi_\lambda,\xi_\mu}(X, \SL(2,\CC)))$ and 
$b^{k}_{c}=b^{k}_{c}(\cR_{\xi_\lambda,\xi_\mu}(X, \SL(2,\CC)))$.
Note that the elements in the columns of Table \ref{tab} add as 
$b^{k}_{c}=\sum_{p} h^{k,p,p}_{c}$, and the elements of the rows satisfy that their alternate sums are
$e_p=\sum_{k}(-1)^{k}h_c^{k,p,p}$. So we have 
\begin{table} [H]
\begin{tabular}{|c|c|c|c|c|c|c|}
    \hline
    ~             & $b^{4}_{c}=10$ & $b^{5}_{c}=2$ & $b^{6}_{c}=3$ & $b^{7}_{c}=0$ & $b^{8}_{c}=1$ & $\sum_{k}(-1)^{k}h^{k,p,p}_{c}$ \\ \hline
    $h^{k,0,0}_{c}$ & $h^{4,0,0}_{c}=1+a-\epsilon_1$      & $h^{5,0,0}_{c}=a$     & $h^{6,0,0}_{c}=\epsilon_1$      & $0$          & $0$         & $1$       \\ \hline
    $h^{k,1,1}_{c}$ & $h^{4,1,1}_{c}=2+b-\epsilon_2$       &  $h^{5,1,1}_{c}=b$    & $h^{6,1,1}_{c}=\epsilon_2$      & $0$         & $0$        & $2$        \\ \hline
    $h^{k,2,2}_{c}$ & $h^{4,2,2}_{c}=6+c-\epsilon_3$      & $h^{5,2,2}_{c}=c$     & $h^{6,2,2}_{c}=\epsilon_3$   & $0$       & $0$         & $6$        \\ \hline
    $h^{k,3,3}_{c}$ & $0$                   & $0$                 & $2$                   & $0$                  & $0$                 & $2$                         \\ \hline
    $h^{k,4,4}_{c}$ & $0$                   & $0$                 & $0$                   & $0$                  & $1$                 & $1$                         \\ \hline
    \end{tabular}
     \caption {}\label{tab}
\end{table}
\noindent where $\epsilon_1,\epsilon_2,\epsilon_3$ are $1,0,0$ or $0,1,0$ or $0,0,1$; and $a,b,c$ are positive
integers such that $a+b+c=2$. This means that there are $18$ possible solutions.
In any case, at least we get that 
$h^{7,p,p}_{c}(Z)=0$ for all $p=0,1,2,3,4$, $h^{8,p,p}_{c}(Z)=0$ for $p=0,1,2,3$, 
$h^{8,4,4}_{c}=1$, $h^{k,3,3}_{c}(Z)=0$ for $k=4,5,7,8$ and $h^{6,3,3}_{c}(Z)=2$.


\begin{thebibliography}{10}

\bibitem{biquard-jardim} O. Biquard and M Jardim, Asymptotic behaviour and the moduli space of 
doubly-periodic instantons, J. Eur. Math. Soc. 3 (2001) 335--375.

\bibitem{BY} H.U. Boden and K. Yokogawa, 
Moduli spaces of parabolic Higgs bundles and parabolic K(D) pairs over smooth curves I,  
Internat. J. Math. 7 (1996) 573--598.

\bibitem{CaHaMi} A. de Cataldo, T. Hausel and L. Migliorini,
Topology of Hitchin systems and Hodge theory of character varieties, preprint, arXiv:1004.1420.

\bibitem{De} P. Deligne,
Th\'eorie de Hodge II, Publ. Math. I.H.E.S. 40 (1971) 5--57.

\bibitem{GPGoMu} O. Garc\'{\i}a-Prada, P.B. Gothen and V. Mu\~noz, 
Betti numbers of the moduli space of rank 3 parabolic Higgs bundles, Mem. Amer. Math. Soc.
187 (2007) viii+80 pp.

\bibitem{jardim} M. Jardim, Nahm transform for doubly-periodic instantons, Comm. Math. Phys. 225 (2002) 639--668.

\bibitem{Ha} T. Hausel, Mirror symmetry and Langlands duality in the non-Abelian Hodge theory of a curve,
Geometric methods in algebra and number theory, 193–217, Progr. Math., 235, Birkhauser, 2005.

\bibitem{HaLeRV} T. Hausel, E. Letellier and F. Rodriguez-Villegas, Arithmetic harmonic analysis on character and
quiver varieties, arXiv:0810.2076.

\bibitem{HaRV} T. Hausel and F. Rodriguez Villegas, 
Mixed Hodge polynomials of character varieties, Invent. Math.
174 (2008) 555--624.

\bibitem{HaTh} T. Hausel and M. Thaddeus, Mirror symmetry, Langlands duality and Hitchin systems, Invent.
Math. 153 (2003) 197--229.

\bibitem{Hi} N.J. Hitchin, The self-duality equations on a Riemann surface, Proc. London Math. Soc. (3)
55 (1987) 59--126.

\bibitem{LoMuNe} M. Logares, V. Mu\~{n}oz and P.E. Newstead, 
Hodge polynomials of $\SL(2,\CC)$-character varieties of surfaces of low genus, 
Rev.\ Mat.\ Complut. 26 (2013) 635--703.

\bibitem{MaMu} J. Mart\'{\i}nez and V. Mu\~noz, Hodge-Deligne polynomial of the
$\SL(2,\CC)$-character variety of a surface of genus $g=3$, arXiv:1405.7120.

\bibitem{Me} M. Mereb, On the E-polynomials of a family of character varieties, Ph.D. dissertation,
arXiv:1006.1286.

\bibitem{Mu} V. Mu\~noz, The $\SL (2,\CC)$-character varieties of torus knots, Rev.\ Mat.\ Complut.
22 (2009) 489--497.

\end{thebibliography}
\end{document}